\documentclass[12pt]{article}
\usepackage{geometry,amssymb,amsmath,enumerate, tikz, hyperref, amsthm, amsfonts}

\newtheorem{theorem}{Theorem}[section]

\newtheorem{lemma}[theorem]{Lemma}
\newtheorem{definition}[theorem]{Definition}

\newcommand{\ex}{\mathrm{ex}}
\newcommand{\forb}{\mathrm{forb}}

\newcommand{\PP}{\mathbb{P}}
\newcommand{\EE}{\mathbb{E}}

\newcommand{\B}{\mathcal{B}}

\newcommand{\D}{\mathcal{D}}

\newcommand{\F}{\mathcal{F}}
\newcommand{\G}{\mathcal{G}}
\newcommand{\I}{\mathcal{I}}

\newcommand{\K}{\mathcal{K}}

\newcommand{\Hc}{\mathcal{H}}
\newcommand{\Lc}{\mathcal{L}}

\newcommand{\Pc}{\mathcal{P}}

\newcommand{\ve}{\varepsilon}

\newcommand{\floor}[1]{\left\lfloor #1\right\rfloor}
\newcommand{\ceil}[1]{\left\lceil #1\right\rceil}
\newcommand{\La}{\mathrm{La}}

\newtheorem{prob}{Problem}

\title{On the number of families avoiding a subposet}

 \author{Tao Jiang \footnote{Dept. of Mathematics, Miami University, Oxford, OH 45056, USA, {\tt jiangt@miamioh.edu}.  }\and Sean Longbrake \footnote{Dept. of Mathematics, Emory University,  Atlanta, GA 30322, USA, {\tt sean.longbrake@emory.edu}} \and Liana Yepremyan \footnote{Dept. of Mathematics, Emory University,  Atlanta, GA 30322, USA {\tt lyeprem@emory.edu}.  Research is supported by the National Science Foundation grant 2247013: Forbidden and Colored Subgraphs.}}
\date{\today}

\begin{document}

\maketitle

\begin{abstract}
In this paper we show that for any poset $P$ that is not an antichain, the number of induced $P$-free families in  the Boolean lattice $2^{[n]}$ is at most $ 2^{O(\La^*(n,P))}$, where $\La^*(n,P)$ denotes the the largest size of an induced $P$-free subfamily of  $2^{[n]}$. We also obtain related supersaturation results. 
\end{abstract}

\section{Introduction}

Recall that the Boolean lattice $2^{[n]}$ is the power set of $[n] = \{1, \dots,  n\}$ partially ordered under inclusion. The \emph{height} of $P$ is the size of the largest chain in $P$, and the \emph{width} of
$P$ is the size of the largest antichain in $P$. The \emph{dimension} of the poset $P$ is the smallest $d$ such that there exist $d$ linear orderings $\varphi_1, \dots,  \varphi_d: P \rightarrow \{1, ..., |P|\}$ for which for $x, y \in P$, we have $x<_{P}y$ if and only if $\pi_i(x) < \pi_j(y)$ for $i = 1, \dots,  d$. Let $P$, $Q$ be two finite posets, that is, they are finite sets equipped with partial orders $<_{P}$ and $<_Q$. An {\it induced poset homomorphism} is a function $f: P\to Q$ such that $f(x) <_Q f(y)$ if and only if $x <_P y$. We say that a poset $Q$ contains an induced copy of another poset $P$ if there is an injective induced poset homomorphism from $P$ to $Q$. If $Q$ does not contain an induced copy of $P$, we say that $Q$ is  {\it induced $P$-free}. 

Given a poset $P$ and an integer $n$, we define $\La(n,P)$ to be the largest size of a $P$-free subfamily of $\B_n$  and $\La^*(n,P)$ the largest size of an induced $P$-free subfamily of the Boolean lattice, $2^{[n]}$. In this paper we will study questions related to $\La^*(n,P)$. Regarding $\La(n,P)$,  we refer the reader to the survey of Griggs and Li~\cite{GLi}. For a small sample of more recent work on $\La(n,P)$ see, for instance, ~\cite{BGW, BTW, collaresmorris, EIL, GNPV}. Regarding $\textrm{La}^*(n,P)$, the following 
theorem was established by Methuku and P\'alv\"olgyi~\cite{methuku2017forbidden}  and subsequently, Tomon~\cite{tomon2019forbidden}. In \cite{methuku2017forbidden}, $C_P$ is typically exponential in $|P|$. For posets $P$ of constant heights, Tomon \cite{tomon2019forbidden} improved this to $C_P=|P|^{c_h},$  where $c_h$ only depends on the height $h$ of $P$.

\begin{theorem}[\cite{methuku2017forbidden, tomon2019forbidden}]
For every poset $P$, there exists a constant $C_P$ such that $$\La^*(n,P)\leq C_P\binom{n}{\lfloor n/2\rfloor}.$$
\end{theorem}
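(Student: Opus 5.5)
Fix $P$ with $|P|=p$ and height $h$, and let $\F\subseteq 2^{[n]}$ be induced $P$-free. The plan is to split the Boolean lattice into pieces of controlled size and show that each piece meets $\F$ in few sets, in the style of the Lubell chain argument; I follow Methuku and P\'alv\"olgyi.

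\textbf{Step 1 (reduction to the middle).} Sets of size outside $[n/2-\sqrt{n\log n},\,n/2+\sqrt{n\log n}]$ number $o\big(\binom{n}{\lfloor n/2\rfloor}\big)$. So it suffices to bound $\F$ restricted to the middle levels.

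\textbf{Step 2 (a universal host structure).} The first key fact is that $P$ embeds as an induced subposet of $2^{[k]}$ for some $k=k(P)$, for instance via the map $x\mapsto\{y: y\le_P x\}$. Consequently, any family containing an induced copy of a suitable ``grid-like'' configuration contains $P$. The standard choice of configuration is a product of chains: a copy of $[m]^d$ with $d=\dim P$. I take $d=k$ for safety, since $2^{[k]}=[2]^k$.

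\textbf{Step 3 (averaging).} Instead of full chains, I average over random ``$k$-dimensional chain blocks''. Pick a uniformly random permutation $\pi$ of $[n]$ and cut it into $k$ consecutive intervals $I_1,\dots,I_k$ of equal length. Let $\mathcal{C}_\pi$ be the family of all sets of the form $A_1\cup\cdots\cup A_k$, where each $A_j$ is an initial segment of $I_j$. Then $\mathcal{C}_\pi$ is isomorphic to the grid $[n/k+1]^k$, and every set of size $s$ lies in $\mathcal{C}_\pi$ with the same probability $q_s$.

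Let $M$ be the maximum size of an induced $P$-free subfamily of the grid $[N]^k$. Double counting gives
\[
\sum_{F\in\F} q_{|F|}\le M .
\]
Two estimates then finish the argument. First, $M\le c_P N^{k-1}$: this is a grid version of the problem. It follows by induction or a Marcus--Tardos / K\H{o}v\'ari--S\'os--Tur\'an type argument, because a dense subset of $[N]^k$ contains a combinatorial subcube $[2]^k$ whose corner pattern induces $2^{[k]}$, and $2^{[k]}\supseteq P$. Second, for middle sizes $s$, one has $q_s\gtrsim N^{k-1}/\binom{n}{\lfloor n/2\rfloor}$. This holds because each level of the grid has about $N^{k-1}$ elements, and by symmetry $q_s$ equals the expected number of grid elements of size $s$ divided by $\binom ns$. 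Combining the two estimates gives $|\F|\le C_P\binom{n}{\lfloor n/2\rfloor}$.

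\textbf{Main obstacle.} The hard step is the grid lemma $M=O(N^{k-1})$ in the induced setting. A dense subset may contain the elements of a subcube $[2]^k$, but to get an \emph{induced} copy of $2^{[k]}$ the chosen points must be in general position. The chosen elements must have exactly the comparabilities of the Boolean cube, and no extra comparabilities are allowed among them. Since the subcube is realized with distinct coordinate values in every direction, the union-of-initial-segments structure makes comparability coordinatewise. So I would first try to find a combinatorial cube $\{a,b\}^k$ with $a_j<b_j$ in each coordinate. Such a cube is exactly induced $2^{[k]}$, and finding one is a density Hales--Jewett / Marcus--Tardos style statement for $k$-dimensional grids, with linear-in-hyperplane extremal bound.
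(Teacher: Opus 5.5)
The paper only cites this result, but your averaging framework is the standard route to it, and it is essentially the mechanism of the proof of Theorem~\ref{supersaturation2}. The steps that hold up are these: $\mathcal{C}_\pi$ is an induced copy of the grid $[N]^k$ inside $2^{[n]}$, the identity $\sum_{F\in\F}q_{|F|}=\EE|\F\cap\mathcal{C}_\pi|\le M$ is correct, and the level count gives $q_s\ge c_kN^{k-1}/\binom{n}{\lfloor n/2\rfloor}$ for $|s-n/2|\le\sqrt{n\log n}$.

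The gap is the grid lemma $M=O(N^{k-1})$, which carries all of the difficulty, and the route you propose for it is false. A subset of $[N]^k$ with $cN^{k-1}$ points need not contain a box $\{a_1,b_1\}\times\cdots\times\{a_k,b_k\}$ with $a_j<b_j$. Take a rectangle-free $S\subseteq[N]^2$ with $\Theta(N^{3/2})$ points, for instance the point--line incidences of a projective plane. Then $S\times[N]^{k-2}$ has $\Theta(N^{k-1/2})$ points, and any such box inside it would project onto a rectangle in $S$. K\H{o}v\'ari--S\'os--Tur\'an type arguments give exactly these bounds that are superlinear in the hyperplane size, and density Hales--Jewett (fixed alphabet, growing dimension) says nothing in this setting. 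Your own remark about general position points the right way, but the box is the opposite of general position: each coordinate takes only two values on its $2^k$ points.

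The missing idea is to forbid a \emph{permutation} pattern instead of a box. Take a realizer $L_1,\dots,L_k$ of $P$, that is, linear extensions whose intersection is $<_P$. One exists for any $k\ge\dim P$, repeating members if necessary, so your choice $k=|P|$ is fine. The points $(L_1(x),\dots,L_k(x))$, $x\in P$, form a $k$-dimensional permutation matrix in $[|P|]^k$: every axis-parallel hyperplane contains exactly one of them. The product order only depends on the relative order within each coordinate. Hence any occurrence of this pattern in $\F\cap\mathcal{C}_\pi$, obtained via order-preserving injections $[|P|]\to[N]$ in each coordinate, is an induced copy of $P$. The bound $M=O_P(N^{k-1})$ is then the Klazar--Marcus theorem, the higher-dimensional extension of Marcus--Tardos, which is exactly Theorem~\ref{thm:KM} as quoted in the paper. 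This is a deep input, not something you can get by induction or by locating subcubes. Once you invoke it in place of your box-finding step, the rest of your argument goes through and yields $|\F|\le C_P\binom{n}{\lfloor n/2\rfloor}$.
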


In this paper we consider two related classical questions, one is the \emph{counting question}, the number of induced $P$-free families in the Boolean lattice $[2]^n$ for a general poset $P$, denoted by $\forb^*(n,P)$, and the second is the \emph{supersaturation} question which asks how many  induced copies of $P$ we get in a family $\mathcal{F}\subseteq [2]^n$  that has more than $\La^*(n,P)$ members.

 The study of the number of $H$-free graphs for a fixed graph  or hypergraph $H$, denoted by $\forb(n,H)$ is a very well developed area. For non-bipartite graphs $H$, the seminal works of Erd\H{o}s, Kleitman, and Rothschild \cite{EKR-Rome} and 
Erd\H{o}s, Frankl, and R\"odl~\cite{EFR} established $\forb(n,H)=2^{(1+o(1))\ex(n,H)}$. This was extended by Nagle, R\"odl, and Schacht \cite{NRS} to non-$k$-partite $k$-uniform hypergraphs $H$. The study of $\forb(n,H)$ for  bipartite graphs and $k$-partite $k$-uniform hypergraphs is much more difficult. There have been quite a lot of breakthroughs (for instance \cite{MS, FMS} ) in recent years due to the development of the powerful container method
\cite{BMS, ST}. Using the container method, Morris and Saxton \cite{MS} showed $\forb(n,C_{2\ell})=2^{O(n^{1 + 1/\ell}) }$, and Ferber, McKinley, and Samotij \cite{FMS} proved far-reaching results, showing that for all  $k$-partite $k$-unform hypergraphs $H$ satisfying $\ex(n,H)\geq \varepsilon n^{k-\frac{1}{m_k(H)}+\varepsilon}$, for some $\varepsilon>0$, $\forb(n,H)=2^{O(\ex(n,H))}$ holds, where $m_k(H)=\max_{F\subset H} \frac{e(F)-1}{v(F)-k}$. In the first part of this paper we consider the analogous question for general posets $P$. Note that the same question for the number of $P$-free subsets of $2^{[n]}$, instead of \emph{induced} $P$-free subsets, has an affirmative answer since any $P$-free family is free of chains of size $|P|$, where $|P|$ is the number of elements in $P$, and by results of Collares and Morris~\cite{collaresmorris}, this implies $\forb(n,P)$ is at most  $2^{(1+o(1))(|P| - 1)\binom{n}{n/2}}$.  For induced setting, recently the authors of this paper together with Spiro~\cite{JLSY} showed that 
 for posets whose Hasse diagram is a tree of height $h$, $\forb^*(n,P)\leq 2^{(h - 1 + o(1))\binom{n}{n / 2}}.$ This extended previous work of Patk\'os and Treglown~\cite{PT} and Gerbner, Nagy, Patk\'os, and Vizer~\cite{GNPV} who obtained similar results for special subclasses of tree posets.  Our first result, stated below, is an upper bound on $\forb^*(n,P)$ for general $P$, in the spirit of the aforementioned results of Ferber, McKinley, and Samotij~\cite{FMS}. 


\begin{theorem}\label{thm:succinctmain}
    For any poset $P$ that is not an antichain,
    \[\forb^*(n,P)\leq 2^{O(\La^*(n,P))}.\]
\end{theorem}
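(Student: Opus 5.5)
We plan to use the hypergraph container method, applied to the $|P|$-uniform hypergraph $\Hc$ on vertex set $2^{[n]}$ whose edges are the induced copies of $P$. Since $P$ is not an antichain, it contains a comparable pair, so $\La^*(n,P)\ge \binom{n}{\lfloor n/2\rfloor}$ (take a middle layer, which contains no comparable pair). Combined with the Methuku--P\'alv\"olgyi/Tomon bound, this gives $\La^*(n,P)=\Theta\bigl(\binom{n}{\lfloor n/2\rfloor}\bigr)$. It therefore suffices to show $\forb^*(n,P)\le 2^{O(\binom{n}{\lfloor n/2\rfloor})}$.

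\textbf{Step 1 (balanced supersaturation).} We first prove a supersaturation lemma: there are constants $C,c>0$ such that every family $\F\subseteq 2^{[n]}$ with $|\F|\ge C\binom{n}{\lfloor n/2\rfloor}$ contains a collection $\mathcal{P}$ of induced copies of $P$ with two properties. First, $|\mathcal{P}|$ is large, namely $\Omega(|\F|\cdot \Delta)$ for a parameter $\Delta$ tied to the size of $\F$. Second, no set of $j$ members of $\F$ lies in more than roughly $|\mathcal{P}|/|\F|^{\,j-1}\cdot$(a small factor) copies, for each $1\le j\le |P|$.

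To build such copies we would embed $P$ inside short "chains of intervals", using Tomon's technique. Consider random full chains, or a symmetric chain partition. A family much larger than $C\binom{n}{n/2}$ must have many chains that each meet $\F$ in many elements. Inside a chain segment $A\subset B$ with $|B\setminus A|$ bounded, the structure of $2^{B\setminus A}$ is a small Boolean lattice. Since $P$ embeds as an induced subposet of $2^{[d]}$ for bounded $d$ (via its dimension / down-set representation), we can find induced copies there.

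The codegree control comes from the randomness in choosing which elements to use. Copies through a fixed $j$-set are constrained to lie in a bounded neighbourhood, so they are counted by fewer degrees of freedom.

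\textbf{Step 2 (containers).} Feed these codegree bounds into the Balogh--Morris--Samotij / Saxton--Thomason container lemma, applied iteratively. Every induced $P$-free family is contained in one of at most $2^{O(\binom{n}{n/2})}$ containers. In each round the fingerprint size is about $\tau|V|$ with $\tau\approx |\F|^{-1/(|P|-1)}$-type quantities. Each container has size at most $C\binom{n}{\lfloor n/2\rfloor}$. The total fingerprint cost summed over the iterations should be a geometric-type sum dominated by $O(\binom{n}{n/2})$. For this, the supersaturation must be strong enough that $\tau |\F|$ stays $O(\binom{n}{n/2})$ once $|\F|\ge C\binom{n}{n/2}$, even when $|\F|$ is close to $2^n$. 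Then
\[\forb^*(n,P)\le 2^{O(\binom{n}{\lfloor n/2\rfloor})}\cdot 2^{C\binom{n}{\lfloor n/2\rfloor}}=2^{O(\La^*(n,P))}.\]

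\textbf{Main obstacle.} The hard part is Step 1: obtaining supersaturation with good codegree control. Merely counting many copies is easy by averaging over chains. The difficulty is showing that copies are spread out, especially for the low-order codegrees $j=2$ when two comparable sets are far apart in size.

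We would first try restricting to copies of $P$ living in intervals $[A,B]$ with $|B\setminus A|\le d$, where $d$ depends only on $P$. A pair of sets is then contained in few such intervals, and the count of copies is driven by how many "dense" short intervals $\F$ has. If that fails near the middle layers, we would fall back on a layered argument: partition $2^{[n]}$ into blocks of $O(1)$ consecutive levels, and apply a graph-like container bound within each block.
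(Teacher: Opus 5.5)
\textbf{There is a genuine gap in Step~1, and the route you sketch for it cannot work.} Your reduction matches the paper: the middle layer gives $\La^*(n,P)\ge\binom{n}{\lfloor n/2\rfloor}$ because $P$ has a comparable pair, and Methuku--P\'alv\"olgyi/Tomon give the matching upper bound. The iterated-container skeleton of Step~2 also matches. But Step~1 is the whole difficulty.

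\textbf{Why bounded-span copies fail.} A container argument on a hypergraph $\Hc$ of copies never certifies fewer than $2^{\alpha(\Hc)}$ families. So for a bound of $2^{O(\binom{n}{n/2})}$, every family of size at least $C'\binom{n}{\lfloor n/2\rfloor}$ must contain an edge. Suppose the edges are copies of $P$ inside intervals $[A,B]$ with $|B\setminus A|\le d$ for a fixed $d=d(P)$. The union of the levels $\lfloor n/2\rfloor+j(d+1)$, $j\in\mathbb{Z}$, meets each such interval in at most one level, hence in an antichain. So it contains no edge, although it has about $2^n/(d+1)$ members. Already $r>C$ of these levels near the middle give a family of size $(r-o(1))\binom{n}{\lfloor n/2\rfloor}$ with no such copy, so your supersaturation claim is false for this class of copies.

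The fallback with blocks of $O(1)$ consecutive levels fails the same way: $2^{\Omega(2^n)}$ families are induced $P$-free inside every block. The same argument shows the hypergraph must contain edges of span $\Omega(\sqrt n)$.

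Two further points. First, averaging over chains yields chains, not induced copies of a general $P$. Second, the right codegree normalization is $\Delta_j(\Hc)\lesssim\tau^{j-1}|\Pc|/|\F|$, not $|\Pc|/|\F|^{j-1}$. Writing $k=|\F|/\binom{n}{n/2}$, $\tau(k)$ must decay faster than $1/k$ for the fingerprint costs $\tau\log(1/\tau)|\F|$ to sum to $O(\binom{n}{n/2})$. Since $\Delta_{|P|}\ge 1$, the average degree $|\Pc|/|\F|$ must then grow faster than $k^{|P|-1}$. Your sketch has no mechanism for this.

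\textbf{The missing idea: large span as the source of codegree control.} This is what Theorem~\ref{mainhammer} does.
\begin{enumerate}
\item Keep one residue class $\F_i$ of $|F|\bmod t$, with $t=30|P|/\mu(P)$. Comparable members are then at least $t$ levels apart, so every copy of $P$ in $\F_i$ has $d(\cdot)\ge t$.
\item Work in a uniformly random interval sublattice $W$ of dimension $m=\lfloor 4n/k^2\rfloor$. This is linear in $n$ and shrinks as $\F$ gets denser, chosen so that $\EE|W\cap\F_i|\ge(2+o(1))C_P\binom{m}{m/2}$; the bound on $\La^*(m,P)$ then applies inside $W$.
\item By Lemma~\ref{chaincontainedinlattice}, $\PP(S\subseteq W)\le (m/n)^{d(S)}2^{m-n}$, so a set of large span lies in $W$ far less often than a single set. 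Each copy is therefore counted in few sublattices, boosting the number of copies by roughly $(n/2m)^{t}\approx(k^2/8)^{t}$.
\item For the same reason, sets already of high degree in a greedily built $\Hc$ (``dangerous'' sets) are rare inside $W$. After deleting one element of each from $W$, copies avoiding them remain.
\end{enumerate}
This gives $|\Hc|\ge\frac{1}{2t}(k/8)^{2t\mu(P)(|P|-1)}|\F|$ with $\deg_{\Hc}(S)\le 2K_P(k/8)^{2t\mu(P)(|P|-|S|)}$. Thus $\tau\approx(k/8)^{-2t\mu(P)}$, and your fingerprint sum converges. For $k\gtrsim\sqrt n$ the paper applies this to a subfamily of size $\Theta(\sqrt n)\binom{n}{n/2}$, giving $\tau=n^{-\Theta(|P|)}$.

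\textbf{A caution if you adopt this route.} The residue-class trick bounds $d(S)$ from below only through comparable pairs; two sets on one level can have $d(S)=2$. So the dangerous-set estimate needs every subset of a chosen copy to be spread out. One way is to use only copies whose members lie on pairwise distinct levels. These can be extracted from induced copies of $P\times[|P|^2]$, so the bound on $\La^*(m,P\times[|P|^2])$ supplies them inside $W$.
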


 Theorem~\ref{thm:succinctmain} follows from the following  technical theorem, combined with the results of Methuku and P\'alv\"olgyi \cite{methuku2017forbidden} and Tomon~\cite{tomon2019forbidden} on $\La^*(n,P)$.
 
\begin{theorem}\label{thm:main}
Let $P$ be a poset and $C_P$ be some constant such that for all $n \geq n_0$, $\La^*(n, P) \leq C_P \binom{n}{n / 2}$. Then, there exists $n_1$ such that for all $n \geq n_1$,  the number of induced $P$-free  families in the Boolean lattice $[2]^n$ is at most
$$\forb^*(n,P) \leq \exp\left(490\frac{|P|^2}{\log(|P|)} C_p \binom{n}{ n / 2}\right).$$ 
\end{theorem}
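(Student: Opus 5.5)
We will prove Theorem~\ref{thm:main} with the hypergraph container method, applied to the hypergraph $\Hc$ on vertex set $2^{[n]}$ whose edges are the $|P|$-element subfamilies that span an induced copy of $P$. Induced $P$-free families are exactly the independent sets of $\Hc$. The container lemma of Balogh--Morris--Samotij and Saxton--Thomason then reduces counting to two tasks. First, produce a collection $\mathcal{C}$ of containers with $\log|\mathcal{C}|=O(\frac{|P|^2}{\log|P|}C_P\binom{n}{n/2})$. Second, ensure each container has size $O(C_P\binom{n}{n/2})$. This gives $\forb^*(n,P)\le |\mathcal{C}|\cdot 2^{\max|C|}$.

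Because the number of containers is controlled by a degree condition, we need a \emph{balanced supersaturation} result. The target statement: if $\F\subseteq 2^{[n]}$ has $|\F|\ge K\,C_P\binom{n}{n/2}$ for a suitable $K$ (of order $|P|$), then $\F$ contains a collection $\mathcal{P}$ of induced copies of $P$ with two properties. The collection is large, of order $|\F|\cdot t^{|P|-1}$ for a parameter $t$. It also has controlled codegrees: every $j$-subset of $\F$ lies in at most about $t^{|P|-j}$ copies of $\mathcal{P}$. The natural source of many copies is the following.
\begin{enumerate}[(i)]
\item Partition $2^{[n]}$ into symmetric chains, or take random maximal chains / random subcubes. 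A subcube $\{A: X\subseteq A\subseteq Y\}$ with $|Y\setminus X|=m$ is a copy of $2^{[m]}$. By hypothesis $\La^*(m,P)\le C_P\binom{m}{m/2}$ for $m\ge n_0$, so any subfamily of the subcube denser than that contains an induced $P$.
\item Average over subcubes of dimension $m$, chosen with $m$ a constant depending on $|P|$ (this is where $|P|^2/\log|P|$ will come from). A family of density $K$ times the extremal density inside a typical subcube contains many induced copies of $P$ there. This is the standard Kleitman-type averaging, using the LYM-type weight.
\end{enumerate}
Copies that lie within one small subcube automatically have bounded codegree. The reason is that a fixed set $A$ belongs to a limited number of subcubes of given shape in which it is near the middle layer.

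Once balanced supersaturation is in place, iterate the container lemma in the standard way. Start from $2^{[n]}$. While the current container has size $>K C_P\binom{n}{n/2}$, apply supersaturation to obtain a sub-hypergraph with codegrees $\Delta_j\lesssim \tau^{j-1}e/v$. Then apply the container lemma with $\tau\approx 1/t$, so that each fingerprint has size $O(\tau |\F|)$. Each step shrinks the container by a constant factor, and after $O(\log n)$ steps sizes are $O(\binom{n}{n/2})$. The number of containers multiplies to $2^{O(\sum \tau|\F_i|\log(1/\tau))}$, a geometric-type sum bounded by $O(\binom{n}{n/2})$ times the constants. The final count is $\exp(O(|P|^2/\log|P|\cdot C_P\binom{n}{n/2}))$, with explicit constants tracked to $490$.

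The main obstacle is the balanced supersaturation step. Sets far from the middle layers belong to few subcubes in which they are central, so the codegree bounds are uneven. We first discard sets outside the middle $n/2\pm\sqrt{n\log n}$ layers; there are $o(\binom{n}{n/2})$ of these. We then weight subcubes by the position of their bottom element, so that all remaining sets are covered roughly uniformly. The precise choice of $m\approx |P|/\log|P|$ trades off the number of copies against the codegree, and yields the stated $|P|^2/\log|P|$ factor.
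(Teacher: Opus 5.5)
Your framework of iterated containers fed by a balanced supersaturation from random interval sublattices is the paper's. However, the supersaturation step fails as you set it up.

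\textbf{Constant $m$ gives no copies.} A uniformly random $m$-dimensional interval $W$ contains each set with probability $2^{m-n}$. So for $|\F|=KC_P\binom{n}{n/2}$ you get $\EE|\F\cap W|\approx KC_P\sqrt{2/(\pi n)}\,2^m$, which is only about $K\sqrt{m/n}$ times $C_P\binom{m}{m/2}$. The bound ``copies in $W \ge |\F\cap W|-\La^*(m,P)$'' is therefore negative on average when $m$ is constant. Reweighting subcubes by the level of the bottom element does not fix this. An $m$-dimensional interval meets only $m+1$ levels, and a family whose LYM weight is spread over $\Theta(\sqrt n)$ levels has density about $KC_P/\sqrt n$ in every such interval, far below the extremal density $\approx C_P/\sqrt m$ of $2^{[m]}$. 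The hypothesis also only bounds $\La^*(m,P)$ for $m\ge n_0$. The averaging works only for $m\gtrsim n/K^2$. The paper takes $m=\lfloor 4n/k^2\rfloor$ with $|\F|=ktC_P\binom{n}{n/2}$, which is why it needs $k\le\sqrt{4n/n_0}$.

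\textbf{Codegrees do not decay for such $m$.} Your claim that copies inside one subcube automatically have small codegrees is false here. An $s$-set $S$ in a copy can have $d(S)$ as small as about $\mu(P)(s-1)$. It then lies in $W$ with probability only a factor $(m/n)^{d(S)}\approx(4/k^2)^{d(S)}$ below that of a single set. This is far weaker than the per-vertex decay the counting requires.

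\textbf{The missing residue-class trick.} The paper first passes to $\F_i=\{F\in\F:|F|\equiv i \pmod t\}$ with $t=30|P|/\mu(P)$, losing a factor $t$ in size. Then every $s$-set $S$ inside an induced copy of $P$ in $\F_i$ has $d(S)\ge t\,d^*(S)\ge t\mu(P)(s-1)$. By Lemma~\ref{chaincontainedinlattice}, $S$ lies in $W$ with probability at most $(8/k^2)^{t\mu(P)(s-1)}2^{m-n}$. This makes the dangerous $s$-sets (codegree above roughly $(k/8)^{2t\mu(P)(|P|-s)}$) so rare inside $W$ that one can delete a vertex from each and still find copies. A maximality argument then yields $|\Hc|\gtrsim(k/8)^{2t\mu(P)(|P|-1)}|\F|$ with $\Delta_s(\Hc)\lesssim(k/8)^{2t\mu(P)(|P|-s)}$.

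\textbf{Why $\tau$ must shrink with $k$.} This strong decay in $k$ is also what your container count needs. If $\tau$ does not shrink as $|\F|/\binom{n}{n/2}$ grows (your ``$\tau\approx 1/t$''), the container lemma's bound at the very first step on $2^{[n]}$ is already $\exp(\Theta(\tau\log(1/\tau)2^n))=\exp(\Theta(\sqrt n\binom{n}{n/2}))$. Your ``geometric-type sum'' is dominated by that first term. The paper instead has $\tau\approx 10^6|P|^5(k/8)^{-2t\mu(P)}$. Its exponent $2t\mu(P)=60|P|$ is large enough to absorb the $2^{O(|P|)}$ factors in $A$ and $\delta^{-1}$, so that $\sum_i\tau_i k_i\log(1/\tau_i)$ converges.

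\textbf{Source of the $|P|^2/\log|P|$ factor.} This factor is $|P|/\mu(P)$ via $\mu(P)\ge\log|P|/|P|$. It comes from $t$, since the process stops once containers have size below $16tC_P\binom{n}{n/2}$, and not from any choice $m\approx|P|/\log|P|$.
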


It would be desirable to improve the bound to  $\forb(n, P) \leq  2^{ (C_P +o(1)) \binom{n}{n/2}}$. Under the condition that $\lim_{n \rightarrow \infty} \frac{\La^*(n, P)}{\binom{n}{n/2}}$ exists, this would be best possible, in particular implying that $\forb(n, P) = 2^{(1 + o(1))\La^*(n, P)}$. 


Now we move to the second focus of our paper, supersaturation. As a main ingredient to establishing  Theorem~\ref{thm:main}, we derive the following \emph{balanced} supersaturation bound. Namely, if $|\F| = t C_P \binom{n}{n/2}$ with $t \geq e$, then the number of copies of $P$ in $\F$ is at least $\Omega(e^{t \log(|P|)/(e|P|)} |\F|)$ (see  Theorem~\ref{mainhammer}). To get stronger supersaturation bounds, we exploit the connection between the extremal numbers of posets in grids, and $\La^*(n,P)$, as done in~\cite{methuku2017forbidden, tomon2019forbidden}. Let us now define these extremal numbers.

\begin{definition}
    Let $k_1, \dots k_d$ be positive integers. The Cartesian product $[k_1] \times \dots \times [k_d]$ has a natural partial ordering $\preceq$, with $(a_1, \dots a_d) \preceq (b_1, \dots b_d)$ if for all $i$, $a_i \leq b_i$. We shall call this poset structure a $d$-dimensional grid. The sides of the grid are $k_1, \dots k_d$. If $k_1 = k_2 = \dots = k_d = k$, we will write $[k]^d$ for
    $[k_1] \times\dots \times [k_d]$. 
\end{definition}

For integers $n, d$, let $\ex^*(n, d, P)$ be the maximum size of a subfamily of $[n]^d$ that is induced $P$-free.  The following bound on $\ex^*(n,d,P) $ is implied by the main result of Klazar and Marcus~\cite{KM}, also reiterated in the paper of Methuku and P\'alv\"{o}lgyi ~\cite{methuku2017forbidden}. (See \cite{tomon2019forbidden} for detailed discussion.)

\begin{theorem}[Theorem 1.3 in~\cite{methuku2017forbidden}]\label{thm:KM}Given a poset $P$,  let $d$ be its dimension, then for any $n$, the following is true:
$$\ex^*(n,d,P)\leq  2^{O_d(|P|\log{|P|})}n^{d-1}.$$
\end{theorem}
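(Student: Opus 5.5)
We prove Theorem~\ref{thm:KM} by reducing it to the Klazar--Marcus theorem for $d$-dimensional $0$-$1$ matrices, i.e.\ the $d$-dimensional Füredi--Hajnal/Marcus--Tardos bound. That theorem states: if $M$ is a $d$-dimensional permutation matrix of side $k$, then any $d$-dimensional $0$-$1$ matrix of side $n$ that avoids $M$ (as a submatrix, allowing extra $1$'s) has at most $2^{O_d(k\log k)}n^{d-1}$ ones. The plan is to encode $P$ as such a permutation matrix and an induced $P$-free family $\F\subseteq[n]^d$ as a $0$-$1$ matrix.

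First we set up the encoding of $P$. Let $d=\dim P$, with linear extensions $\varphi_1,\dots,\varphi_d:P\to[|P|]$ such that $x<_P y$ if and only if $\varphi_i(x)<\varphi_i(y)$ for all $i$. Each $\varphi_i$ is a bijection. Define the $d$-dimensional permutation matrix $M_P$ of side $k=|P|$ to have its ones exactly at the points $(\varphi_1(x),\dots,\varphi_d(x))$ for $x\in P$. In every coordinate these points are distinct.

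Next we show that containing $M_P$ gives an induced copy of $P$. Let $\F\subseteq[n]^d$, viewed as the support of a $0$-$1$ matrix $A$. Suppose $A$ contains $M_P$. Then there are strictly increasing maps $g_1,\dots,g_d:[k]\to[n]$ with $(g_1(\varphi_1(x)),\dots,g_d(\varphi_d(x)))\in\F$ for every $x\in P$. Let $f(x)$ denote this point. Since the $g_i$ are strictly increasing and the $\varphi_i$ are injective, $f(x)\preceq f(y)$ with $x\neq y$ holds if and only if $\varphi_i(x)<\varphi_i(y)$ for all $i$, which holds if and only if $x<_P y$. In particular $f$ is injective and is an induced poset homomorphism. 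So $\F$ contains an induced copy of $P$. Hence an induced $P$-free $\F$ avoids $M_P$, and $|\F|\le 2^{O_d(|P|\log|P|)}n^{d-1}$.

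The only delicate point is the coordinatewise strictness. Induced copies require incomparability to be preserved, and this depends on the strict inequalities coming from distinct $\varphi_i$-values together with the strict monotonicity of the $g_i$. The rest is quoting Klazar--Marcus, whose dependence on $k$ is exactly $2^{O_d(k\log k)}$, as reiterated in~\cite{methuku2017forbidden}.
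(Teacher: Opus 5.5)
Your proposal is correct and follows exactly the route the paper indicates: the paper gives no argument of its own and simply attributes the bound to the Klazar--Marcus theorem (as reiterated in~\cite{methuku2017forbidden}), and your encoding of $P$ as the $d$-dimensional permutation matrix with ones at $(\varphi_1(x),\dots,\varphi_d(x))$ is precisely the standard reduction behind that attribution. The one point that needs checking is that injectivity of the $\varphi_i$ and strict monotonicity of the $g_i$ preserve incomparabilities as well as comparabilities, and you handle it correctly.
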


The leading coefficient in Theorem~\ref{thm:KM} was improved to $2^{O_d(|P|)}$ by Geneson and Tian in \cite{geneson2017extremal}. For posets of constant height, 
Tomon~\cite{tomon2019forbidden} substantially improved  the bound in Theorem \ref{thm:KM}
as follows.

\begin{theorem}[Theorem 12~in~\cite{tomon2019forbidden}]\label{thm:Tomon} There exists a constant $c_h$ depending only on $h$ such that the following holds for $P$ of height at most $h$. Let $n$ be a positive integer, $d = 2|P|$. Then 

$$\ex^*(n,d,P)\leq |P|^{c_h}n^{d-1}.$$
\end{theorem}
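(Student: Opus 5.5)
Since $\ex^*(n,d,P)$ is monotone under taking induced subposets of $P$, I would argue by induction on the height $h$. The idea is to use the many coordinates ($d=2|P|$) to manufacture incomparability freely, so that only the comparabilities have to be forced by density. The starting observation is that $P$ embeds as an induced subposet of the cube $[2]^{d}\subseteq [n]^d$: map $x\mapsto \mathbf 1_{\{y\le_P x\}}\in\{0,1\}^{|P|}$, which is an induced embedding. The spare $|P|$ coordinates will be used as "free" directions in which copies can be spread out without creating unwanted relations.

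For the base case $h=1$, $P$ is an antichain. By de Bruijn--Tengbergen--Kruyswijk, $[n]^d$ can be partitioned into $\mathrm{width}([n]^d)\le n^{d-1}$ chains. Hence any $\F$ with $|\F|>(|P|-1)n^{d-1}$ contains an antichain of size $|P|$ by Dilworth, and the constant $|P|$ suffices.

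For the inductive step, let $M$ be the set of maximal elements of $P$ and $P'=P\setminus M$, which has height $h-1$. Given $\F\subseteq[n]^d$ with $|\F|> K n^{d-1}$, I would first clean $\F$ by repeatedly deleting points $u$ whose "up-cone" $\{v\in\F: v\succeq u\}$ is small relative to the local density. The aim is a subfamily $\F_0$, still of size $\ge |\F|/2$, in which every point has at least $|P|^{c}$ elements of $\F$ above it, and these lie in well-separated directions: for each free coordinate $i$ there are many points above $u$ that are strictly larger in coordinate $i$. Counting along lines parallel to each axis shows such a cleaning loses only an $O(d\,|P|^{c}n^{d-1})$ term. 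Next, apply the induction hypothesis inside $\F_0$, restricted to a sub-box, to find an induced copy of $P'$. I would take this copy to be "generic", meaning that its points differ pairwise in the free coordinates, which I expect to arrange by a pigeonhole over translates of sub-boxes. Finally, for each $m\in M$ choose a point $v_m$ above the images of $\{y<_P m\}$. The choice should use the up-cone richness, and should be made in a way that increases the coordinate reserved for $m$ far beyond the others. This makes $v_m$ incomparable to the images of elements not below $m$ and to the other $v_{m'}$.

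The main obstacle is precisely this last step: the points $v_m$ must lie above the correct set and must \emph{not} lie above any other element of the $P'$-copy. The cleaning must therefore supply points above a given set $S$ that are large in a prescribed coordinate yet small in the coordinates witnessing the incomparabilities. I would try to handle this by a density-increment or regularity-type choice of a sub-box in which $\F$ is "spread" in all $2|P|$ directions, then do the embedding coordinate by coordinate. The recursion $K_h \le |P|^{O(1)}K_{h-1}$ then gives $K_h\le |P|^{c_h}$, as claimed.
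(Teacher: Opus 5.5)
There is a genuine gap, and it is exactly the step you flag as ``the main obstacle''. Note that the paper does not prove this statement; it quotes it from Tomon, so your argument has to stand on its own.

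\textbf{The black-box induction cannot control where the copy of $P'$ lands.} You use the bound for $P'=P\setminus M$ only as an existence statement, so you get no information about how the copy of $P'$ sits relative to the rest of $\mathcal{F}$. The extension step, however, needs a point of $\mathcal{F}$ above the join $\bigvee\{\varphi(y): y<_P m\}$ for each $m\in M$. That join is generally not in $\mathcal{F}$. Your cleaning only guarantees many points of $\mathcal{F}$ above each \emph{single} point of $\mathcal{F}$, so it says nothing about joins.

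Here is a concrete failure. Let $\mathcal{F}$ be a band of $R$ consecutive rank levels $s,\dots,s+R-1$ of $[n]^d$, with $R$ large enough to exceed your threshold. Let $P$ consist of two incomparable elements below a common third one. Points in the lower levels, away from the boundary of the grid, pass any cleaning of your type. Yet the induction hypothesis may return two distinct, hence incomparable, points $\alpha,\beta$ of level $s$ with $\|\alpha-\beta\|_1>2R$. Since $\alpha\vee\beta$ has rank $s+\frac12\|\alpha-\beta\|_1$, no point of $\mathcal{F}$ lies above both.

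So the copy of $P'$ must be chosen in coordination with the extension. That requires a stronger inductive statement, for instance a localized or rooted version controlling where joins of down-sets fall relative to $\mathcal{F}$. Formulating and proving such a statement is the real content of the theorem, and it is missing.

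\textbf{The genericity of the $P'$-copy is also unsupported.} The hypothesis is for $2|P'|$ coordinates. The natural transfer to $[n]^{2|P|}$, pigeonholing onto a slice, yields copies that are constant in the extra coordinates, not pairwise distinct.

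\textbf{The proposed fix does not fit the setting.} You suggest a regularity or density-increment choice of a sub-box in which $\mathcal{F}$ is ``spread''. But $\mathcal{F}$ has density only $\Theta(K/n)$ in $[n]^d$. This is a sparse regime where such tools give nothing usable, and they would not plausibly produce a constant polynomial in $|P|$. Consequently the recursion $K_h\le |P|^{O(1)}K_{h-1}$ is asserted rather than derived.

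\textbf{A smaller slip in the base case.} Partitioning $[n]^d$ into $n^{d-1}$ chains and pigeonholing yields chains in $\mathcal{F}$, not antichains. The conclusion is still true via Mirsky's theorem. The grid $[n]^d$ has height $d(n-1)+1$, so it splits into that many antichains. Hence $\mathcal{F}$ contains an antichain of size at least $|\mathcal{F}|/(d(n-1)+1)$.
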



Building on the connection to extremal problem in the grids, we obtain the following superstaturation result, which gives us an improvement of a polynomial factor over
the bound obtained in Theorem \ref{mainhammer}.

\begin{theorem}\label{thm:supersaturation}
    There exists an absolute constant $K$ such that the following holds for every poset $P$. Suppose there exists constants $C_P, d$ such that $\ex^*(n, d, P) \leq C_p n^{d - 1}$. Then for every positive integer $t \geq 1$, there exists a $c_{t, d}>0$, such that the following holds. If $n$ is sufficiently large and $\F \subseteq [2]^n$, such that $|\F| \geq (t + K \sqrt{d} C_p + \varepsilon) \binom{n }{n/2}$, the number of induced copies of $P$ contained in $\F$ is at least  $$ c_{t, d} \ve n^{\floor{\frac{t}{K\sqrt{d}}}} \binom{n}{ n/2}.$$ 
\end{theorem}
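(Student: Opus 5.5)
We follow the grid-embedding approach of Methuku--P\'alv\"olgyi and Tomon. The aim is to reduce to a supersaturation statement inside $d$-dimensional grids and then average over random embeddings of grids into $2^{[n]}$.

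\textbf{Grid embeddings.} Partition $[n]$ into $d$ blocks $X_1,\dots,X_d$ of size $m=n/d$, and fix in each block a maximal chain $\emptyset=A_i^0\subset A_i^1\subset\dots\subset A_i^m=X_i$. The map $(a_1,\dots,a_d)\mapsto \bigcup_i A_i^{a_i}$ is an induced poset embedding of $[m+1]^d$ into $2^{[n]}$. We choose the partition and the chains uniformly at random (equivalently, a random permutation of $[n]$). Tomon's computation then gives the following. A set $F$ with $||F|-n/2|\le O(\sqrt n)$ lies in the image with probability about $\frac{c\, m^{d-1}\, d^{(d-1)/2}}{\binom{n}{n/2}}$. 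Here the $\sqrt d$-type factor comes from the local central limit theorem for the multinomial distribution of $(|F\cap X_i|)_i$ near the middle. Consequently the expected size of $\F$ restricted to a random grid is at least $\frac{|\F|}{K\sqrt d\,\binom{n}{n/2}}\,m^{d-1}$, up to lower order terms. We may assume most of $\F$ lies in the middle $O(\sqrt n)$ layers, since the other layers carry only $o(\binom{n}{n/2})$ sets. Under the hypothesis $|\F|\ge (t+K\sqrt d C_P+\ve)\binom n{n/2}$, the restricted family $\G$ in a typical grid therefore has size at least $\bigl(C_P+\frac{t+\ve}{K\sqrt d}\bigr)m^{d-1}$. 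This is the excess $s:=\frac{t+\ve}{K\sqrt d}$ over the extremal bound $C_P m^{d-1}$.

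\textbf{Grid supersaturation.} Next we show that a subfamily $\G\subseteq[m]^d$ with $|\G|\ge (C_P+s)m^{d-1}$ contains at least $c\,\ve\, m^{\floor{s}}\cdot m^{d-1}$ induced copies of $P$, for some $c>0$. The plan is to slice $[m]^d$ into the $m$ hyperplanes orthogonal to one coordinate axis; each slice is a $(d-1)$-dimensional grid. We proceed by induction on $\floor{s}$. When $s\in(0,1)$, repeatedly deleting a point lying in a copy of $P$ yields at least $|\G|-C_Pm^{d-1}\ge \ve' m^{d-1}$ copies. To go from excess $s$ to excess $s-1$, we would use a Kleitman-type deletion argument: any set of $m^{d-1}$ points (a ``layer's worth'') can be removed from $\G$ while keeping excess $s-1$. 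By induction each such reduced family has many copies, and averaging over the many ways to choose the removed sets gains a factor of $m$ each step. Equivalently, we may count copies containing a prescribed chain of $\floor{s}$ extra points. If this gain is difficult to obtain directly, the fallback is to mimic Kleitman's chain-partition proof of Erd\H{o}s's $k$-chain supersaturation, using the grid's symmetric chain decomposition lifted from $2^{[n]}$.

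\textbf{Transfer back and main obstacle.} The number of induced copies of $P$ in $\F$ is at least the expected number of copies inside the random grid, divided by the maximum probability that a fixed copy lies in the image. That probability is about $m^{d-1}/\binom{n}{n/2}$ times the corresponding factor for the other $|P|-1$ points. For the other points we use that, conditioned on one point being in the image, each further comparable set lies in the image with probability only polynomially small in $n$, and this is what converts $m^{\floor{s}}$ into $n^{\floor{t/(K\sqrt d)}}$ up to the constant $c_{t,d}$. The main obstacle is this last double counting, together with the grid supersaturation step. Copies of $P$ in $\F$ may span many layers, so the probability that the whole copy lands in one grid is not uniform across copies. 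We would handle this by counting only copies whose sets lie in the middle $O(\sqrt n)$ layers. For those copies the containment probability is bounded above uniformly, with a constant depending only on $t$ and $d$.
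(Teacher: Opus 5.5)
Your framework (random grid embeddings, then double counting) is the right kind of argument. However, the step that is supposed to produce the exponent is false, so there is a genuine gap.

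\textbf{The grid supersaturation claim fails.} Take $P$ the two-element chain and $d=2$, so $C_P=1$, since the largest antichain of $[m]^2$ has $m$ points. Let $\G$ be a band of $\floor{s}+2$ consecutive antidiagonals $\{(a,b): r\le a+b\le r+\floor{s}+1\}$ with $r$ near $m$. Then $|\G|\ge (C_P+s)m$ for large $m$. But each point of $\G$ lies below only $O(s^2)$ points of $\G$, so $\G$ has $O_s(m)$ comparable pairs, not $\Omega(m^{\floor{s}}\cdot m)$. This example is not artificial. If $\F$ is the middle $k$ layers of $2^{[n]}$, its trace on your random grid is exactly a band of $k$ consecutive ranks, which contains only $O_{k,d}(m^{d-1})$ copies of $P$. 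So inside a grid, excess over $C_Pm^{d-1}$ buys only linearly many copies, and no Kleitman-type deletion can gain a factor $m$ per unit of excess.

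\textbf{The transfer step cannot recover the exponent either.} Given that $A=\cap P'$ lies in the grid, the probability that $B=\cup P'$ does is about $n^{-|B\setminus A|}$. Dividing by it therefore gains a power of $n$ equal to the \emph{spread} of the copies you found. You have no control over the spread, so these copies may all have $|B\setminus A|=d^*(P)$. That yields only a factor $n^{d^*(P)}$, independent of $t$.

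\textbf{The missing idea is to spend the excess on spread rather than on multiplicity.} The paper proves $\ex^*_t(n,d,P)\le \ex^*(n,d,P)+tn^{d-1}$ (Lemma~\ref{gappedturanbound}):
\begin{enumerate}
\item Discard the at most $tn^{d-1}$ family points that have fewer than $t$ family points above them on their first-coordinate line.
\item Find a copy of $P$ in what remains.
\item Among the elements of the copy with largest first coordinate, take a maximal one, $A$. Replace $A$ by a family point $B$ on the same line at least $t$ steps higher.
\end{enumerate}
The result $P^*$ is still an induced copy and has $|\cup P^*-\cap P^*|\ge t$.

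Combine this with Tomon's partition of $2^{[n]}$ into grids with sides between $c\sqrt{n/d}$ and $2c\sqrt{n/d}$. This is where $K\sqrt d$ comes from, since $2^n/(c\sqrt{n/d})\le \frac{\sqrt{\pi d}}{c}\binom{n}{n/2}$. Every random relabelling then yields at least $\frac{\ve}{2}\binom{n}{n/2}$ copies of spread at least $t'=\floor{t/(K\sqrt d)}$ inside the grids. Each such copy lies in a grid with probability $O_{t,d}(n^{-t'})$, and double counting gives the factor $n^{t'}$.

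Using a partition rather than a single random grid also removes your worry about non-uniform containment probabilities. Every $A$ lies in some grid, so only the conditional probability for $B$ matters.

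\textbf{A further problem.} Your reduction to the middle $O(\sqrt n)$ layers is not valid. The layers outside $n/2\pm C\sqrt n$ contain $\Theta_C(\sqrt n\binom{n}{n/2})$ sets in total, so $\F$ can avoid that window entirely. The paper uses a window of half-width $2\sqrt{n\log n}$ instead.
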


For our purposes, when $|\F|$ is large enough that we can use the bound of Theorem~\ref{thm:KM} on $\ex^*(n, d, P)$ with $d$ the dimension $P$ in Theorem~\ref{thm:supersaturation}. In this case, the result we derive is stronger than applying Theorem~\ref{thm:supersaturation}, with the result of Theorem~\ref{thm:Tomon}. However, for many posets, Theorem~\ref{thm:Tomon} allows one to derive supersaturation results for smaller families than using Theorem~\ref{thm:KM}.


\section{Preliminary Lemmas}
For a subset $P$ of $(Q, \preceq)$ with $Q$ a distributive lattice, we let $\cup_Q P$ be the minimum element $B$ of $Q$ satisfying for every $A \in P$, $A \preceq B$. Similarly, we let $\cap_Q P$ be the maximum element $B$ of $Q$ such that for every $A \in P$, $A \succeq B$. If the host lattice is clear, we drop the subscript $Q$ from the notation. Given $A \preceq  B$ in $Q$, we let $|B - A| = |C| - 1$, with  $C$ being the longest chain in $Q$ such that $\cup C = B$ and $\cap C =A$.  
\begin{definition}
For $S \subseteq 2^{[n]}$, let $d(S) = | \cup S - \cap S|.$
\end{definition}
\begin{definition}
For arbitrary poset $P$, let 
\[d^*(P) = \min\{ d(\varphi(P)): \varphi \mbox{ is an induced embedding of $P$ into $2^{[n]}$ for some $n$}\}. \footnote{Note this is well defined by the well-ordering property of the natural numbers. }\]
\end{definition}
\begin{definition} \label{def:mup}
    Let $\mu(P) = \min_{D \subseteq P, |D| \geq 2} \frac{d^*(D)}{|D| - 1}$. 
\end{definition}
Observe $\mu(2^{[n]}) = \frac{n}{2^n - 1}$,  $\mu(C_n) =1$, and $\frac{\log|P|}{|P|} \leq \mu(P) \leq 1$. 

Let $A\subseteq B\subseteq [n]$. Let $2^{[n]}[A,B]=\{C\subseteq [n]: A\subseteq C\subseteq B\}$ and call it the {\it interval sublattice} of $2^{[n]}$ {\it spanned by} $A,B$.
It is easy to see that for any $1\leq m\leq n$ there are $\binom{n}{m}2^{n-m}$ interval sublattices of $2^{[n]}$ with dimension $m$. Indeed, if $S_A,S_B$ are the $(0,1)$-indicator vectors of length $n$ for $A,B\subseteq [n]$ then 
$2^{[n]}[A,B]$ is determined by specifying the $m$ coordinates $S_A, S_B$ differ in and
the value of each of the remaining $n-m$ coordinates.

\begin{lemma}\label{chaincontainedinlattice}
Let $S\subseteq 2^{[n]}$.
Let $W$ be an interval sublattice of $2^{[n]}$ of dimension $m$ chosen uniformly at random. 
Then
$$\PP(S \subseteq W) = \frac{\binom{m}{d(S)}}{\binom{n}{d(S)} 2^{n - m}}.$$

In particular, for any $C \in 2^{[n]}$, 

$$\PP(C \in W) = \frac{1}{2^{n - m}}.$$
\end{lemma}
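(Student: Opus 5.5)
We prove the formula by counting: among the $\binom{n}{m}2^{n-m}$ interval sublattices of dimension $m$ described above, we count those that contain $S$ and divide. Write $L=\cap S$ and $U=\cup S$ (intersection and union as subsets of $[n]$). Since $d(S)=|\cup S-\cap S|$, and the longest chain in $2^{[n]}$ from $L$ to $U$ has $|U\setminus L|+1$ elements, we get $d(S)=|U\setminus L|$.

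The key observation is that an interval sublattice $2^{[n]}[A,B]$ contains every member of $S$ if and only if $A\subseteq L$ and $U\subseteq B$. Indeed, if $A\subseteq X\subseteq B$ for all $X\in S$, then $A\subseteq \cap S$ and $\cup S\subseteq B$; the converse is immediate. (If $S=\emptyset$ the statement is degenerate and can be excluded or handled by convention.) Passing to indicator vectors, $W$ is determined by:
\begin{itemize}
\item its set $D=B\setminus A$ of $m$ free coordinates, and
\item the fixed values on $[n]\setminus D$.
\end{itemize}
The conditions $A\subseteq L\subseteq U\subseteq B$ then say three things:
\begin{itemize}
\item every coordinate in $U\setminus L$ must be free, i.e.\ $U\setminus L\subseteq D$;
\item on coordinates outside $D$, the fixed value must agree with $L$ (equivalently with $U$, since they agree off $U\setminus L$);
\item on coordinates of $D$ outside $U\setminus L$, there is no constraint, since $A\subseteq L$ and $U\subseteq B$ hold automatically there.
\end{itemize}
So the number of sublattices containing $S$ is the number of $m$-sets $D$ with $U\setminus L\subseteq D$, namely $\binom{n-d(S)}{m-d(S)}$, times exactly one choice of fixed values.

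Therefore
\[
\PP(S\subseteq W)=\frac{\binom{n-d(S)}{m-d(S)}}{\binom{n}{m}2^{n-m}}.
\]
The identity $\binom{n}{m}\binom{m}{d}=\binom{n}{d}\binom{n-d}{m-d}$, which counts pairs (a $d$-set inside an $m$-set), turns this into $\binom{m}{d(S)}/\bigl(\binom{n}{d(S)}2^{n-m}\bigr)$, as claimed. When $d(S)>m$ both sides are $0$. The special case $S=\{C\}$ has $d(S)=0$ and gives $2^{-(n-m)}$.

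The only step needing care is the characterization of containment through $\cap S$ and $\cup S$ together with the uniqueness of the fixed coordinates. Everything else is routine counting.
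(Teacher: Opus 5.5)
Your proof is correct and follows essentially the same route as the paper. Both arguments characterize containment by $A\subseteq \cap S$ and $\cup S\subseteq B$, note that $W$ is then uniquely determined by its free-coordinate set $B\setminus A\supseteq \cup S\setminus\cap S$, count $\binom{n-d(S)}{m-d(S)}$ such sets, and finish with the identity $\binom{n}{m}\binom{m}{d}=\binom{n}{d}\binom{n-d}{m-d}$.
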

\begin{proof}
By our assumption $W=2^{[n]}[A,B]$ for some $A,B\in 2^{[n]}$ with $|B\setminus A|=m$.
Since $W$ is a sublattice, $W$ contains $S$ if and only if 
$A\subseteq \cap S\subseteq \cup S\subseteq B$.
In particular, we have 
\[A=\cap S\setminus (B\setminus A) \mbox{ and } B=\cup S \cup (B\setminus A).\]

Hence, the pairs $A,B$ are completely determined by $B\setminus A$.
Since $B \setminus A$ contains $\cup S \setminus \cap S$, 
there are $\binom{n - d(S)}{m - d(S)}$ ways to choose $B\setminus A$.
Thus, we have exactly $\binom{n - d(S)}{m - d(S)}$ choices of $W$ that contains $S$. 
So, using the identity $\binom{n}{m}\binom{m}{d(S)}=\binom{n}{d(S)}\binom{n-d(S)}{m-d(S)}$, we have
\[\PP(S\subseteq W)= \frac{\binom{n-d(S)}{m-d(S)}}{\binom{n}{m} 2^{n-m}}=
\frac{\binom{m}{d(S)}}{\binom{n}{d(S)} 2^{n - m}}.\]
\end{proof}

\section{Proof of Main Theorem}

Let $P$ be a given poset. First, we show that when $\F\subseteq 2^{[n]}$ is sufficiently dense, there exists a dense balanced collection of induced copies of $P$ in $\F$.

\begin{theorem}\label{mainhammer}
Let $C_P$ be some constant such that for all $n \geq n_0$, $La^*(n, P) \leq C_P \binom{n}{n / 2}$. Let $t$ be an integer. Then, there exists an $n_1$ such that for all $n \geq n_1$, the following holds for all $k$ satisfying $ 8\leq k \leq \sqrt{\frac{4n}{n_0}}$: Let $\F \subseteq 2^{[n]}$ satisfy that $|\F| = kt C_P \binom{n}{n / 2}$. Then, there exists a collection $\Hc$ of induced copies of $P$ satisfying the following properties: \begin{enumerate}
    \item $|\Hc| \geq \frac{1}{2t} (k/8)^{2 t \mu(P) (|P| - 1)} |\F| $
    \item For every $S \subseteq 2^{[n]},$ $\deg_\Hc(S) \leq 2K_P \cdot (k/8)^{ 2 t \mu(P) (|P| - |S|)} $.
\end{enumerate}
 where $K_P = 2^{|P| + \log(P) + 2}$ and $\mu(P)$ is as defined  
 in Definition \ref{def:mup}.
\end{theorem}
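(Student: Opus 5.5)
We will build $\Hc$ by a random-sublattice sampling argument combined with iterative greedy deletion, in the spirit of balanced supersaturation proofs for graphs. Fix $m$ with $m \geq n_0$, chosen so that $2^{n-m}$ relates to $k$ appropriately. Concretely, we will take interval sublattices $W$ of dimension $m$ where $k^2 \approx 4n/m$, i.e.\ $m \approx 4n/k^2 \ge n_0$; this is where the hypothesis $k\le\sqrt{4n/n_0}$ is used. The copy $W \cong 2^{[m]}$ satisfies $\La^*(m,P)\le C_P\binom{m}{m/2}$. By Lemma~\ref{chaincontainedinlattice}, each set lies in $W$ with probability $2^{-(n-m)}$. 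Hence $\EE|\F\cap W| = |\F|2^{m-n}$. Comparing with $\binom{m}{m/2}$ via $\binom{n}{n/2}2^{m-n}\approx \binom{m}{m/2}\sqrt{n/m}$ gives $\EE|\F\cap W|\approx kt\,C_P\binom{m}{m/2}\sqrt{n/m}$. This is a large multiple, of order $t k^2/2$, of the extremal number in $W$.

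The core step is a supersaturation statement inside $W$: if a family in $2^{[m]}$ exceeds $\La^*(m,P)$ by a lot, we repeatedly extract copies of $P$. We will run the following deletion process on $\F$ to build $\Hc$ greedily. Maintain $\Hc$ and the set of \emph{saturated} subsets $S$, namely those with $\deg_\Hc(S)$ at the cap $2K_P(k/8)^{2t\mu(P)(|P|-|S|)}$. While $|\Hc|$ is below the target in item 1, we argue that some induced copy of $P$ in $\F$ avoids containing any saturated $S$; we add it and repeat. Item 2 then holds by construction, up to the factor $2$ absorbing the final increment.

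To find an unsaturated copy, we double count. The number of saturated sets of size $s$ is at most $|P|^{?}\,|\Hc|/\text{cap}_s$, which is small when $|\Hc|$ is below the target. We count pairs $(W,\text{copy}\subseteq W)$. In each $W$ we use the extremal bound repeatedly: removing one element of each copy found, we get at least $|\F\cap W|-\La^*(m,P)$ copies, since any subfamily of size above $\La^*$ contains a copy. Better, we iterate inside nested random sublattices to boost the count to $(k/8)^{2t\mu(P)(|P|-1)}$ copies per element. This is where $\mu(P)$ enters. A copy $D$ spanning $d(D)\ge d^*(D)\ge \mu(P)(|D|-1)$ coordinates is contained in $W$ with probability $\binom{m}{d}/(\binom{n}{d}2^{n-m})\approx (m/n)^{d}2^{m-n}\approx (4/k^2)^{d}2^{m-n}$. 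Inverting this, each copy in $\F$ is counted in few $W$'s, so many distinct copies exist. The same computation for a saturated $S$ bounds how many copies through saturated sets appear in a random $W$: copies through an $S$ with $|S|=s$ are overcounted by a factor $(k^2/4)^{\mu(P)(s-1)}$ relative to generic ones. Against the cap $(k/8)^{2t\mu(P)(|P|-s)}$, this keeps their total below half of all copies. The exponent $2t$ and the constant $8$ come from balancing $(k^2/4)$ against $(k/8)^2$ while sacrificing $t$ rounds.

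The main obstacle is making this double count rigorous. One needs a lower bound on copies in $\F\cap W$ that grows like a power of the density, not just linearly, uniformly over a typical $W$. For this I would run a $t$-level induction: at level $j$, families of density $jC_P$ relative to their sublattice yield $(k/8)^{2j\mu(P)(|P|-1)}$ copies per element. Each level passes to random subcubes and uses Lemma~\ref{chaincontainedinlattice} together with Jensen/convexity to handle the variance of $|\F\cap W|$. The prefactor $1/(2t)$ and $K_P=2^{|P|+\log|P|+2}$ would arise from summing the saturation losses over the $2^{|P|}$ subsets $S$ of a copy.
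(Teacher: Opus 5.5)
There is a genuine gap. Your skeleton is the paper's: a maximal $\Hc$ under the degree caps, saturated (dangerous) sets, a random interval sublattice $W$ of dimension $m\approx 4n/k^2$, deleting a point of every saturated set inside $W$, and inverting $\PP(P'\subseteq W)$. But nothing in your argument produces the factor $t$ in the exponent.

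Your density computation is off. In fact $\binom{n}{n/2}2^{m-n}\approx\binom{m}{m/2}\sqrt{m/n}$, not $\sqrt{n/m}$, so $\EE|\F\cap W|\approx 2tC_P\binom{m}{m/2}$, only a constant multiple of the extremal number. The power of $k$ therefore cannot come from density inside $W$, and no Jensen step is needed, since the per-$W$ count $|W\cap\F|-\sum_s X_s-C_P\binom{m}{m/2}$ is linear. The gain comes entirely from spread: a copy $P'$ lies in $W$ with probability about $(m/n)^{d(P')}2^{m-n}$.

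Knowing only $d(P')\ge d^*(P)\ge\mu(P)(|P|-1)$, you get a gain of $(k^2/4)^{\mu(P)(|P|-1)}$. This falls short of $(k/8)^{2t\mu(P)(|P|-1)}$ for $t\ge 2$ and large $k$. Worse, the saturated-set step breaks. There can be about $(k/8)^{2t\mu(P)(s-1)}|\F|$ saturated $s$-sets, each lying in $W$ with probability up to about $(4/k^2)^{\mu(P)(s-1)}2^{m-n}$. So the best available bound on their number in $W$ is of order $(4k^{2t-2}64^{-t})^{\mu(P)(s-1)}\,\EE|\F\cap W|$, far more than you can afford to delete.

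Your $t$-level nested induction does not repair this. A uniform random interval sublattice of a uniform random interval sublattice is again uniform, by symmetry of the cube. So nesting is equivalent to a single $W$ of the final dimension $m'$. Density $ktC_P$ then forces $n/m'\lesssim (kt)^2$, which caps the guaranteed gain at about $(kt)^{2\mu(P)(|P|-1)}$.

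The missing idea is a spreading reduction made before any sampling. The paper pigeonholes $\F$ by $|F|\bmod t$ and keeps a subfamily $\F_i$ of a single residue class with $|\F_i|=kC_P\binom{n}{n/2}\ge|\F|/t$. This, not saturation losses, is where the $1/t$ in item 1 comes from. Any two comparable members of $\F_i$ differ in size by at least $t$, which the paper turns into $d(S)\ge t\,d^*(S)\ge t\mu(P)(|S|-1)$. Then a single $W$ with $m=\lfloor 4n/k^2\rfloor$ works:
\begin{itemize}
\item $\EE|W\cap\F_i|\approx 2C_P\binom{m}{m/2}$;
\item $W$ contains at most $\frac{1}{2|P|}C_P\binom{m}{m/2}$ dangerous $s$-sets in expectation;
\item each copy lies in $W$ with probability at most about $(4/k^2)^{t\mu(P)(|P|-1)}2^{m-n}$.
\end{itemize}
Together these give $|\Lc|\ge\frac12(k/8)^{2t\mu(P)(|P|-1)}|\F_i|>|\Hc|$, contradicting maximality.

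One caution if you adopt this route. The residue condition only forces a gap of $t$ between comparable sets, so $d(S)\ge t\,d^*(S)$ is clear when $S$ is a chain but not in general: the two singletons $\{1\},\{2\}$ give $d(S)=2<2t$. Non-chain $S$ would need separate treatment.
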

\begin{proof}
 For each $i=0,\dots,t-1$, let $\G_i=\{F\in \F: |F|\equiv i \pmod{t}\}$.
 Then for some $i$, $|\G_i|\geq  kC_P\binom{n}{n/2}$. Fix such an $i$, and let $\F_i$ be a subfamily of exactly size $kC_P\binom{n}{n/2}$ inside $\G_i$. We will build $\Hc$ greedily inside $\F_i$. Note that for all $S \subseteq \F_i$, $d(S) \geq t d^*(S)$ by the definition of $\F_i$. 
 Assume on the contrary that there is a maximal $\Hc$ satisfying condition two such that $|\Hc| <\frac{1}{2t} (k/8)^{2 t \mu(P) (|P| - 1)} |\F|  \leq \frac{1}{2} (k/8)^{2 t \mu(P) (|P| - 1)} |\F_i| $. 

 We call a set $S \subseteq 2^{[n]}$ dangerous if $\deg_{\Hc}(S) \geq K_P \cdot  (k/8)^{ t \mu(P) (|P| - |S|)}$. Let $\D_s$ be the set of all dangerous sets $S$ with $|S| = s$. 
 Observe that
 
 $$|\D_s| \leq K^{-1}_P 2^{|P|} (k/8)^{ 2 t \mu(P) (s- 1) }|\F_i|,$$ as $$|\D_s| \cdot K_P \cdot  (k/8)^{ 2t \mu(P) (|P| - |S|)} \leq |\Hc| \cdot 2^{|P|} \leq \frac{1}{2} (k/8)^{2 t \mu(P) (|P| - 1)} |\F_i| \cdot 2^{|P|}.$$

Let $W$ be an interval sublattice of $2^{[n]}$ of dimension $m = \floor{\frac{4n}{k^2}}$ chosen uniformly at random. By Lemma~\ref{chaincontainedinlattice}, and since $|\F_i|\geq kC_P\binom{n}{n/2}$, we have
    
 \begin{align*}
 \EE[|W \cap \F_i|] \geq 2^{m - n} k C_P \binom{n}{n / 2}
    \geq  (2 + o(1)) \sqrt{\frac{2}{\pi n}} \sqrt{\frac{n}{m}} C_P 2^m
    \geq (2 +o(1)) C_P \binom{m}{m / 2}.
\end{align*}
Let $X_s$ denote the number of members of $\D_s$ that are contained in $W$. Then
\begin{align*}
    \EE(X_s) &= \sum_{S \in \D_s} \frac{\binom{m}{d(S)}}{\binom{n}{d(S)} 2^{n - m}} 
     = \sum_{S \in \D_s} \frac{(m)_{d(S)}}{(n)_{d(S)} 2^{n - m}}
    \leq \sum_{S \in \D_s} \frac{(m)_{t(s - 1)\mu(P)}}{(n)_{t(s - 1)\mu(P)} 2^{n - m}}\\
    &\leq \frac{2^{|P|}}{K_P} \left(\frac{k}{8}\right)^{2t \mu(P) (s - 1)} |\F_i|  \cdot  \left( \frac{2m}{n} \right) ^{t(s - 1) \mu (P)} 2^{m - n  }\\
    &\leq \frac{2^{|P|+1}}{K_P} \left(\frac{k}{8}\right)^{2t \mu(P) (s - 1)} C_p \sqrt{\frac{n}{m}} \binom{n}{n/2}  \cdot  \left( \frac{8}{k^2} \right) ^{t (s - 1) \mu (P)} 2^{m - n  } \\
    &\leq \frac{1}{2|P|} C_P \binom{m}{ m / 2},
\end{align*}
 where we used $K_P = 2^{|P| + \log(P) + 2}$.

Let $\Lc$ denote the collection of all induced copies of $P$ in $\F_i$ that
do not contain any dangerous set. We bound $|\Lc|$ as follows.
From each interval sublattice $W$ contained in $2^{[n]}$ of dimension $m$, we remove one element from every dangerous set contained in $W$ to obtain $W'$. Since $|W'\cap \F_i|\geq |W \cap \F_i| - \sum_{s = 1}^{|P|} X_s$, we can find $|W\cap \F_i| - \sum_{s = 1}^{|P|} X_s - C_P \binom{m}{ m /2}$  many copies of $P$ not containing any dangerous set $S$ inside $W'$. Summing over all $W$ contained in $2^{[n]}$ of dimension $m$, we find at least 

$$2^{n} \binom{n}{m} \left( \EE[|W \cap \F_i| - \sum_{s \in [|P|]} {X_s}] - C_P \binom{m}{m /2} \right) \geq 2^{n - 1} C_P \binom{n}{m} \binom{m}{ m/2}$$ pairs of a $m$-dimensional lattice and a copy of $P$ not containing a dangerous set inside this lattice. On the other hand by Lemma~\ref{chaincontainedinlattice}, each copy of $P$ appears in no more than $2^{m} \binom{n}{m} \left(\frac{2m}{n} \right)^{td^*(P)}$ $m$-dimensional lattices. Therefore, 

\begin{align*}
   |\Lc|\geq \frac{1}{2} C_P \binom{m}{ m / 2} 2^{n - m} \left(\frac{n}{2m} \right)^{td(P)} &= \frac{1}{2} C_P \sqrt{\frac{2}{\pi m}}  2^{n - m} \cdot 2^m \cdot \left(\frac{k^2}{8} \right)^{td(P)} \\
    &= \frac{1}{2} k^{2t \mu(P) \cdot (|P| - 1) }C_p k \sqrt{\frac{2}{\pi n}} \cdot 2^n \\
    &= \frac{1}{2} (k/8)^{2t \mu(P) \cdot (|P| - 1)} \cdot |\F_i| >|\Hc|
\end{align*}
So, there exists $F\in \Lc \setminus \Hc$. Note that $F$ can be added to $\Hc$ without violating condition two by the definitions of $\Lc$ and dangerous sets. But this contradicts the maximality of $\Hc$. 
\end{proof}
\begin{theorem}[Container Lemma]\label{containerlemma}[\cite{balogh2019efficient}]
Let $r$ be a positive integer and $\Hc$ a nonempty $r$-uniform hypergraph such that $\tau \in (0, 1)$ and $A > 0$ are such that $\tau \cdot  v(\Hc) \geq 10^8 r^6 A$ and for every $s \in \{2, \dots, r\}$, 
$$\Delta_t(\Hc) \leq A \cdot \left( \frac{\tau}{10^6 r^5}\right)^{ s- 1} \cdot \frac{e(\Hc)}{v(\Hc)}.$$

Then, there is a family $\mathcal{C} \subseteq \binom{V(\Hc)}{ \leq \tau \cdot v(\Hc)}$ and $f: \mathcal{C} \rightarrow \mathcal{P}(V(\Hc))$ and $g: \mathcal{I}(\Hc) \rightarrow \mathcal{C}$ such that for every $I \in \I(\Hc)$, 

$$g(I) \subseteq I \subseteq g(I) \cup f(g(I)) \text{ and } |f(g(I)) \leq (1 - \delta) \cdot v(\K), $$

where $\delta = (10^3r^4 K)^{-1}$. Moreover, if $g(I) \subseteq I'$ and $g(I') \subseteq I$ for some $I, I' \in \mathcal{I}(\K)$, then $g(I) = g(I')$. 
\end{theorem}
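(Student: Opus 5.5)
This is the efficient container lemma of Balogh and Samotij~\cite{balogh2019efficient}, and in the paper we would simply cite it. Here $\Delta_t$, $\K$ and $K$ in the statement are read as $\Delta_s$, $\Hc$ and $A$. If we were to reprove it, the plan is an induction on the uniformity $r$, realized by a single deterministic procedure (the ``scythe'' algorithm). The procedure reads an independent set $I\in\I(\Hc)$ and outputs a short fingerprint $g(I)\subseteq I$ together with a set $f(g(I))$ of vertices not excluded from $I$.

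\textbf{The procedure.} We maintain hypergraphs $\Hc_r=\Hc,\Hc_{r-1},\dots,\Hc_1$, where $\Hc_s$ is $s$-uniform. We also fix thresholds
\[
\Delta_s^* \approx A\left(\tfrac{\tau}{10^6r^5}\right)^{s-1}\tfrac{e(\Hc)}{v(\Hc)}
\]
for codegrees. At each step:
\begin{enumerate}[(i)]
\item Among the vertices still alive, take $v$ of maximum degree in the current lowest nonempty layer, breaking ties by a fixed order.
\item Query whether $v\in I$. If not, delete $v$.
\item If $v\in I$, add $v$ to $g(I)$. For each edge $e\ni v$ of the current $\Hc_s$, put $e\setminus\{v\}$ into $\Hc_{s-1}$, unless doing so would push the degree of some subset above its threshold. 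A subset at its threshold is called saturated, and edges through it are discarded.
\end{enumerate}
Because $I$ is independent, no edge of any $\Hc_s$ lies entirely inside $I$. Once $\Hc_1$ contains a singleton $\{u\}$, we know $u\notin I$.

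We stop when $|g(I)|$ reaches $\tau v(\Hc)$, or when some layer $\Hc_s$ with $s<r$ has accumulated about $\delta$-fraction of $e(\Hc)$ scaled by the codegree thresholds. In the second case the vertices of high degree in that layer (in particular those forming singletons of $\Hc_1$ after iterating) are excluded from $I$. We set $f(g(I))$ to be the remaining alive vertices, and we will show these number at most $(1-\delta)v(\Hc)$.

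\textbf{Main obstacle: the counting step.} We must show the process cannot run for $\tau v(\Hc)$ selected vertices without some lower layer becoming rich. Each selected vertex has maximum degree among the alive vertices, so it contributes at least an average-degree's worth of edges, roughly $e(\Hc)/v(\Hc)$ scaled down per layer. The hypothesis $\Delta_s(\Hc)\le A(\tau/(10^6r^5))^{s-1}e(\Hc)/v(\Hc)$ then bounds how many edges are lost to saturated sets. A saturated $(s-1)$-set absorbs at most $\Delta^*_{s-1}$ edges, and there are few of them because each saturation consumes many edges of $\Hc_{s-1}$. This double counting, combined with $\tau v(\Hc)\ge 10^8r^6A$, should give that after $\le\tau v(\Hc)$ selected vertices some layer has density pushing at least $\delta v(\Hc)$ vertices out, with $\delta=(10^3r^4A)^{-1}$. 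Calibrating the constants $10^6r^5$, $10^8r^6$ and $10^3r^4$ so that the per-layer losses telescope is the delicate part. I would first attempt the two-layer case $r=2$, where there are no saturation issues, and then add one layer at a time.

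\textbf{Consistency property.} The ``moreover'' clause comes from determinism. The run of the algorithm depends only on the answers to the queries ``$v\in I$?'', and every positive answer is recorded in $g(I)$. If $g(I)\subseteq I'$ and $g(I')\subseteq I$, we induct along the two runs: up to the first step where they differ, the same vertex $v$ is queried. If $v\in g(I)$ then $v\in I'$, and symmetrically, so the answers agree. A queried vertex not placed in $g(I)$ has answer ``no'' for $I$, and if it were in $I'$ it would lie in $g(I')\subseteq I$, a contradiction. Hence the runs coincide and $g(I)=g(I')$.
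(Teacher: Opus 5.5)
Your treatment matches the paper's: the paper gives no proof of this statement and simply cites the efficient container lemma of Balogh and Samotij, and your reading of the typos ($\Delta_t\to\Delta_s$, $\K\to\Hc$, $K\to A$) is the intended one. Your reproof outline is not a proof, however, and you should rely on the citation: the counting step, which carries all the content, is only asserted; a rich layer $\Hc_s$ with $s\ge 2$ does not by itself exclude any vertex from $I$ (only singletons of $\Hc_1$ do); and a layer-by-layer induction of the classical kind typically gives fingerprints of size about $(r-1)\tau v(\Hc)$ and a $\delta$ decaying super-polynomially in $r$, whereas the polynomial constants stated here are exactly what Balogh and Samotij's refined analysis provides.
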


We will now prove the following result, which immediately implies Theorem~\ref{thm:main} using $\mu(P) \geq \frac{\log(|P|)}{|P|}$ and the result being trivial for $|P| =1$.
\begin{theorem}
Let $P$ be a poset on at least two elements and $C_P$ be some constant such that for all $n \geq n_0$, $\La^*(n, P) \leq C_P \binom{n}{n / 2}$. Then, there exists $n_1$ such that for all $n \geq n_1$,  the number of induced $P$-free  families in the Boolean lattice $[2]^n$ is at most
$$\forb^*(n,P) \leq \exp\left(490\frac{|P|}{\mu(P)} C_p \binom{n}{ n / 2}\right).$$     
\end{theorem}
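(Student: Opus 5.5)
We plan to run the standard iterated container argument, with Theorem~\ref{mainhammer} supplying the balanced supersaturated hypergraph and Theorem~\ref{containerlemma} turning it into containers. Let $r=|P|$. Every induced $P$-free family is an independent set in the $r$-uniform hypergraph on $2^{[n]}$ whose edges are the induced copies of $P$. We build a rooted tree of containers. The root is $2^{[n]}$. A node $\mathcal{C}$ with $|\mathcal{C}| \geq k_0 t C_P\binom{n}{n/2}$ is split by applying the container lemma to the hypergraph $\Hc$ given by Theorem~\ref{mainhammer} for $\F=\mathcal{C}$. The constant $k_0$ is fixed and at least $8$, and $t$ is chosen of order $1/\mu(P)$, say $t=\lceil 1/\mu(P)\rceil$. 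Each child container has size at most $(1-\delta)|\mathcal{C}|$. Every independent set $I\subseteq\mathcal{C}$ is captured by a fingerprint $g(I)$ of size at most $\tau|\mathcal{C}|$. Leaves are containers of size $O(tC_P\binom{n}{n/2})=O(C_P\binom{n}{n/2}/\mu(P))$.

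\textbf{Codegree check.} With $|\F|=ktC_P\binom{n}{n/2}$, Theorem~\ref{mainhammer} gives
\[
e(\Hc)/v(\Hc)\geq \tfrac{1}{2t}(k/8)^{2t\mu(P)(r-1)}
\quad\text{and}\quad
\Delta_s\leq 2K_P(k/8)^{2t\mu(P)(r-s)} .
\]
The required inequality is
\[
\Delta_s\le A\bigl(\tau/(10^6r^5)\bigr)^{s-1}e/v .
\]
This holds if we take $A$ of order $tK_P$ and $\tau=10^6 r^5 (k/8)^{-2t\mu(P)}$. With $t\mu(P)\ge 1$, we get $\tau\lesssim r^5 (8/k)^2$. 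The condition $\tau v(\Hc)\geq 10^8r^6A$ holds easily for large $n$.

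\textbf{Counting.} The key accounting is that the fingerprint at a node of size $|\mathcal{C}|=ktC_P\binom{n}{n/2}$ has size
\[
\tau|\mathcal{C}|\approx r^5 k^{-1}\cdot 64\,tC_P\binom{n}{n/2},
\]
which decreases as $k$ grows. The number of choices is therefore
\[
\binom{|\mathcal{C}|}{\le\tau|\mathcal{C}|}\le \exp\bigl(\tau|\mathcal{C}|\log(e/\tau)\bigr).
\]
Along a root-to-leaf path, $|\mathcal{C}|$ decreases geometrically by factors $(1-\delta)$. Here $\delta=1/(10^3r^4A)$ depends only on $P$, so there are $O_P(\log k)$ steps while $k$ lies in a dyadic range $[2^j,2^{j+1}]$. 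Within each range the total fingerprint contribution is
\[
O_P\bigl(2^{-j}\,j\,tC_P\tbinom{n}{n/2}\bigr),
\]
which sums over $j$ to a convergent series. At the top level, $k$ is so large that $|\F|$ exceeds $2^n$. To stay within the hypotheses $k\le\sqrt{4n/n_0}$ of Theorem~\ref{mainhammer}, we start the tree from the $t$ residue classes. Alternatively, we note that for $k$ near $\sqrt n$ the fingerprint size $\tau|\mathcal{C}|\lesssim 2^n/\sqrt n\cdot\sqrt{n}^{-1}$ remains $o(\binom{n}{n/2})$ times a polylog factor. We will need to check this carefully, possibly by choosing the starting $k$ via a direct count of early steps. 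The total number of leaves, times $2^{|\text{leaf}|}$ for the subsets of each leaf, then gives
\[
\forb^*(n,P)\le \exp\bigl(O(r\,t\,C_P)\binom{n}{n/2}\bigr)=\exp\bigl(O(r/\mu(P))\,C_P\binom{n}{n/2}\bigr).
\]

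\textbf{Main obstacle.} The main obstacle is making the constants land at $490\,|P|/\mu(P)$. The leaf size $k_0tC_P\binom{n}{n/2}$ with $t\approx 1/\mu(P)$ already gives a factor $1/\mu(P)$. The fingerprint sum must be controlled by choosing $k_0$ large enough, depending on $r$, so that it is dominated by the leaf term, contributing the factor $|P|$. The $\delta$- and $K_P$-dependence inside the logarithms is where I expect the most delicate bookkeeping. I would first try to absorb these by taking $t\mu(P)$ a constant multiple, so that $(k/8)^{-2t\mu(P)}$ beats the polynomial factors $r^5,\,A,\,1/\delta$ in $r$ and $2^r$.
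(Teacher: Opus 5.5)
Your architecture matches the paper's: a tree of containers, with Theorem~\ref{mainhammer} supplying the hypergraph at each node and leaves declared once $|\mathcal C|<k_0tC_P\binom{n}{n/2}$. The gap is the parameter choice, which is exactly where the constant $490|P|/\mu(P)$ comes from.

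\textbf{The choice of $t$.} With $t=\lceil 1/\mu(P)\rceil$ you have $t\mu(P)=O(1)$, and then the $|P|$ cannot come from $k_0$.
\begin{itemize}
\item The $s=2$ codegree condition, fed with the bounds of Theorem~\ref{mainhammer}, forces $A\tau\geq 4\cdot 10^6|P|^5\,tK_P\,(k/8)^{-2t\mu(P)}$, where $K_P=2^{|P|+\log|P|+2}$.
\item Each round of Theorem~\ref{containerlemma} shrinks a container only by the factor $1-\delta$, with $1/\delta=10^3|P|^4A$.
\item So halving a container takes about $10^3|P|^4A$ rounds, each costing about $\tau|\mathcal C|\log(e/\tau)$ in the logarithm of the number of fingerprints. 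In total that is at least $2^{\Omega(|P|)}(k/8)^{-2t\mu(P)}|\mathcal C|$. This is the factor hidden in the $O_P$ of your dyadic estimate.
\item Near the bottom this cost must not exceed the leaf size, so you need $(k_0/8)^{2t\mu(P)}\geq 2^{\Omega(|P|)}$. Even applicability needs $\tau<1$, i.e.\ $(k_0/8)^{2t\mu(P)}>10^6|P|^5$.
\item With $t\mu(P)=O(1)$ this forces $k_0=2^{\Omega(|P|)}$. Your leaves then have size $2^{\Omega(|P|)}\mu(P)^{-1}C_P\binom{n}{n/2}$.
\end{itemize}
So the plan yields only $\exp\bigl(2^{O(|P|)}\mu(P)^{-1}C_P\binom{n}{n/2}\bigr)$. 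That suffices for Theorem~\ref{thm:succinctmain} but not for this statement.

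The missing idea is to put $|P|$ into $t$ and keep $k_0$ absolute. The paper takes $t=30|P|/\mu(P)$ and stops once $k<16$. Then $(k/8)^{-2t\mu(P)}\le 2^{-60|P|}$ swamps $10^6|P|^5$, $A=2^{3|P|+9}$ and $1/\delta\le 10^3 2^{5|P|+9}$. The whole tree then has at most $\exp\bigl(C_P\binom{n}{n/2}\bigr)$ leaves, each of size below $16tC_P\binom{n}{n/2}=480\frac{|P|}{\mu(P)}C_P\binom{n}{n/2}$. Your closing sentence points this way only if ``constant multiple'' means a constant multiple of $|P|$, and that contradicts your stated choice of $t$.

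\textbf{The top of the tree.} This regime is left open, and neither of your suggestions works.
\begin{itemize}
\item Theorem~\ref{mainhammer} needs $k\le\sqrt{4n/n_0}$. Its proof applies $\La^*(m,P)\le C_P\binom{m}{m/2}$ with $m=\lfloor 4n/k^2\rfloor$, which requires $m\geq n_0$. Small fingerprints do not remove this hypothesis.
\item Restarting from the $t$ residue classes does not help. A class has size about $2^n/t$, so its $k$ is of order $\sqrt n/(t^2C_P)$, which still exceeds $\sqrt{4n/n_0}$ when $n_0$ is large. Also, covering an independent set by $t$ separate class-trees multiplies the exponent by $t$.
\end{itemize}
The paper's device is as follows. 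When $k\geq\sqrt{4n/n_0}$, apply Theorem~\ref{mainhammer} to a subfamily $\F'\subseteq\F$ of size exactly $\sqrt{4n/n_0}\,tC_P\binom{n}{n/2}$. Then run the container lemma on the resulting hypergraph with vertex set all of $\F$. Since $|\F|/|\F'|$ is bounded independently of $n$, this inflates $A$ and $1/\delta$ only by a constant factor. Meanwhile $\tau=|P|^5(256n_0/n)^{t\mu(P)}$ is polynomially small in $n$. So the boundedly many rounds in this regime contribute only $\exp\bigl(O(\binom{n}{n/2}/\sqrt n)\bigr)$ containers.
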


\begin{proof}
    Let $t =30 |P|/\mu(P)$.
    We apply the container lemma iteratively, so to represent this, we form a rooted tree $T$.  At step $0$, the tree is the singleton $\F= 2^{[n]}$. At step $i$, we examine each leaf $\F$ of the tree, and possibly add new children to $\F$ to grow $T$. There will be three cases for us. Write $|\F| = ktC_P \binom{n}{n/2}$. One case will be if $k < 16$, one if $k \geq  \sqrt{\frac{4n}{ n_0}}$, and one if $16 \leq k < \sqrt{\frac{4n}{ n_0}}$. 
    
    \textbf{Case 1:} If $k < 16$, we are done with this leaf and will do nothing.

    \textbf{Case 2:} If $k \geq \sqrt{\frac{4n}{n_0}}$, take $\F' \subseteq \F$, a family of size $\sqrt{\frac{4n}{ n_0}}tC_P \binom{n}{n/2}$. We then apply Theorem~\ref{mainhammer} to $\F'$ to form a collection $\Pc$ of induced copies of $P$ in $\F'$. Let $\Hc$ be a hypergraph on vertex set $\F$ with edges corresponding to copies of $P$ in $\Pc$. $\Hc$ will satisfy Theorem~\ref{containerlemma} with $A = \sqrt{n_0}2^{3|P| + 11}$, $\tau^{-1} = |P|^{-5}(\frac{n}{256n_0})^{t \mu(P)}$, and $\delta = (\sqrt{n_0}10^72^{6|P|})^{-1}$. Then, Theorem~\ref{containerlemma} gives a collection of families $\F_1, \dots \F_m$ contained inside $\F$ such that $m \leq  \exp((\sqrt{n})^{-1}\binom{n}{n/ 2})$ for $n$ sufficiently large, and that for every $P$-free subfamily of $\F$ is contained in $\F_i$ for some $i$. We add these $\F_i$ as the children of $\F$ to grow the tree $T$. 
     
    \textbf{Case 3:} If $16 \leq k < \sqrt{\frac{4n}{ n_0}}$, we apply Theorem~\ref{mainhammer} to $\F$ to form a collection $\Pc$ of induced copies of $\Pc$ in $\F$. Let $\Hc$ be a hypergraph on vertex set $\F$ with edges corresponding to copies of $P$ in $\Pc$. $\Hc$ will satisfy Theorem $3.2$ with $A = 2^{3|P| + 9}$, $\tau^{-1} = 10^{-6}|P|^{-5}(k/8)^{2t \mu(P)}$, and $\delta = (10^3 2^{5|P| + 9})^{-1}$. Then, Theorem~\ref{containerlemma} gives a collection of families $\F_1, \dots \F_m$ contained inside $\F$ such that 
    \begin{align*}
        m &\leq \exp\left(2\log(10^{-6}|P|^{ - 5} (k/8)^{2t \mu(P)}) 10^6 |P|^5 (k/8)^{ - 2t \mu(P)} ktC_P \binom{n}{n/ 2}\right) \\
        &\leq \exp\left(2^{27} C_P |P|^{9} (k/8)^{-2t \mu (P)} k^2 \binom{n}{n/2}\right)    \end{align*} where we used that $t \leq 30 |P|^2$. We also know that for every $P$-free subset of $\F$ is contained in $\F_i$ for some $i$. We add these $\F_i$ as the children of $\F$ to grow the tree $T$.

    Consider the tree $T$ at the end of the process. Observe that $$\forb^*(n, P) \leq \sum_{\F \textrm{ is a leaf in } T} 2^{|\F|}.$$

    Observe that the number of leaves in $T$ is no more than \begin{align*}
    &\exp\left(\frac{8\log(16n_0) \sqrt{n_0} 10^7 2^{6|P|})}{\sqrt{n}}\binom{n}{n/2} + \sum_{i = 0}^{\infty} |P|^92^{33}C_P (2 )^{-2t \mu(P)}(1 - (2^{5|P|+ 19} )^{-1})^{it \mu(P)} \binom{n}{n/2}\right) \\&\leq \exp\left(\frac{1}{2}\binom{n}{n/2} + 2^{9|P| + 54}C_P2^{-60 |P|} \right) \leq \exp\left(C_P \binom{n}{n/2} \right),
    \end{align*}

where in the first inequality we used that $n$ is sufficiently large and the second that $|P| \geq 2$. 
Since for each leaf $2^{|\F|} \leq 2^{\frac{480 |P|}{\mu(P)} C_P \binom{n}{n/2}}$, we have $\forb^*(n,P) \leq \exp\left(490\frac{|P|}{\mu(P)} C_p \binom{n}{ n / 2}\right).$

    
\end{proof}

\section{Supersaturation}

\begin{theorem}\label{supersaturation2}
    There exists an absolute constant $K$ such that the following holds for every poset $P$. Suppose there exists a constant $C_P$ such that $\ex(n, d, P) \leq C_p n^{d - 1}$. Then for every positive integer $t \geq 1$, there exists a $c_{t, d}>0$, such that the following holds. If $n$ is sufficiently large and $\F \subseteq 2^{[n]}$, such that $|\F| \geq (t + K \sqrt{d} C_p + \varepsilon) \binom{n }{n/2}$, the number of induced copies of $P$ is at least  $$ c_{t, d} \ve n^{\floor{\frac{t}{K\sqrt{d}}}} \binom{n}{ n/2}.$$ 
\end{theorem}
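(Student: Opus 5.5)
We plan to follow the grid-embedding approach of Methuku--P\'alv\"olgyi and Tomon. The idea is to cover $2^{[n]}$ by random $d$-dimensional grids that sit inside the Boolean lattice, and to transfer both the density of $\F$ and the grid extremal bound $\ex^*(m,d,P)\le C_P m^{d-1}$ into each grid.

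\textbf{The embedding.} Split $[n]$ uniformly at random into $d$ blocks $X_1,\dots,X_d$ of size about $n/d$. On each block $X_j$ choose an independent uniformly random maximal chain $\emptyset=Y_j^0\subset Y_j^1\subset\dots\subset Y_j^{|X_j|}=X_j$. The map $(a_1,\dots,a_d)\mapsto \bigcup_j Y_j^{a_j}$ is an induced poset embedding of the grid $[n/d+1]^d$ into $2^{[n]}$. Call its image $G$.

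\textbf{Step 1: expected density in $G$.} A set $F$ of size $\ell$ lies in $G$ exactly when $F$ meets each block in an initial segment of that block's chain. This probability is
\[
\EE\Big[\prod_j \binom{|X_j|}{|F\cap X_j|}^{-1}\Big].
\]
For $|F|$ near $n/2$, the splitting $|F\cap X_j|$ is hypergeometric with fluctuations of order $\sqrt{n/d}$. Consequently this probability is about $(c\sqrt{n/d})^{d-1}/\binom{n}{n/2}$ uniformly over the middle layers, and it drops off in Gaussian fashion with $|\ell-n/2|/\sqrt n$. So we set $m=n/d$ and compare with the grid's middle "hyperplane" of size $\approx \sqrt{d}\,m^{d-1}$ up to a constant. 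The expected value of $|\F\cap G|$ is then at least
\[
\frac{|\F|}{\binom{n}{n/2}}\cdot \frac{m^{d-1}}{K\sqrt d}
\]
for an absolute constant $K$; this is where the factor $K\sqrt d$ comes from. We also use the LYM-type fact that $\F$ beyond the middle layers only helps, since every level contributes at most as much as the middle one.

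\textbf{Step 2: supersaturation inside a grid.} Take a subset $\mathcal G\subseteq[m]^d$ with $|\mathcal G|\ge (s+1)C_Pm^{d-1}+\varepsilon' m^{d-1}$ and $s=\lfloor t/(K\sqrt d)\rfloor$. We want $\Omega_{s,d}(\varepsilon' m^{d-1+s})$ induced copies of $P$ in $\mathcal G$, hence $\Omega(\varepsilon' m^{s})$ copies per point. We get this by iterative deletion: as long as the set has size $>C_Pm^{d-1}$ it contains a copy, so we repeatedly remove one element of a copy. This yields at least $\varepsilon' m^{d-1}$ copies. To obtain the extra factor $m^{s}$, we apply this to many random subgrids or sub-boxes, using averaging over translates or scalings by a factor $m$. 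Alternatively, we induct on $s$: in an $s$-fold excess we find, for each "level", $m$ disjoint hyperplane-like shifts. This averaging step is the main obstacle. We would first try restricting to random subgrids $[m']^d$ with step sizes chosen so that each copy of $P$ lies in few of them; the count of subgrids containing a fixed copy is polynomially smaller than the total number of subgrids. That gap supplies the factor $n^{s}$, in analogy with the $(n/2m)^{td(P)}$ gain in the proof of Theorem~\ref{mainhammer}.

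\textbf{Step 3: transfer back.} We double count pairs consisting of an embedded grid $G$ and a copy of $P$ inside $\F\cap G$. Any fixed copy of $P$ in $2^{[n]}$ lies in a random $G$ with probability at most roughly
\[
\frac{(c\sqrt{n/d})^{d-1}}{\binom{n}{n/2}}
\]
times a constant depending on $d$ and $|P|$. This holds because the copy's elements must lie on a common chain structure, so the probability is dominated by that of its minimal element. Since $|\F|\ge (t+K\sqrt dC_P+\varepsilon)\binom{n}{n/2}$, Step 1 puts each $G$ in the regime of Step 2 on average. Jensen's inequality, or convexity of the count in the excess, then lets us pass from the expectation to the count. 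Dividing by the containment probability yields $c_{t,d}\,\varepsilon\, n^{\lfloor t/(K\sqrt d)\rfloor}\binom{n}{n/2}$ induced copies.
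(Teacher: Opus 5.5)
There is a genuine gap, and it is exactly at the step you flag as the main obstacle. The in-grid supersaturation claimed in Step 2 is false for every $s\ge 1$, so no averaging over subgrids, translates or scalings of $[m]^d$ can supply it.

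Take $d=2$ and $P$ a $2$-chain, so $C_P=1$ works, since the width of $[m]^2$ is $m$. The union $\mathcal G$ of $s+2$ consecutive middle antidiagonals of $[m]^2$ has at least $(s+1+\varepsilon')m$ points for large $m$. But a point has at most $j+1$ points of $[m]^2$ exactly $j$ levels above it, so $\mathcal G$ contains only $O(s^3m)$ comparable pairs, not $\Omega(m^{1+s})$. The same holds in any fixed dimension, since a point of $[m]^d$ has only $\binom{j+d-1}{d-1}$ points $j$ levels above it.

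The polynomial gain exists only in $2^{[n]}$, where a set has roughly $n^j$ supersets $j$ levels up. It must therefore be harvested in the transfer step, and your Step 3 gives it away by bounding the containment probability of a copy by that of its minimal element. Separately, your estimate $(c\sqrt{n/d})^{d-1}/\binom{n}{n/2}$ for the probability that a middle set lies in a side-$n/d$ grid is wrong. By symmetry it equals $|G_\ell|/\binom{n}{\ell}$, which is of order $d^{-1/2}(n/d)^{d-1}/\binom{n}{n/2}$, because unbalanced splits dominate the expectation. This agrees with your final Step 1 bound but not with Step 3.

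The missing idea is to work with \emph{gapped} copies, those with $|\cup P'-\cap P'|\ge s$, in two places.

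First, you need a gapped Tur\'an bound $\ex^*_s(m,d,P)\le \ex^*(m,d,P)+sm^{d-1}$. To prove it, discard the at most $sm^{d-1}$ family points having fewer than $s$ family points above them along the first coordinate, and find a copy among the rest. Then replace a maximal element among those of largest first coordinate by a family point that agrees with it off the first coordinate and is at least $s$ higher in it; one checks the copy stays induced. This additive $sm^{d-1}$ is exactly what the additive $t$ in the hypothesis pays for; your threshold $(s+1)C_P$ does not match the hypothesis. Iterated deletion then gives a number of gapped copies in each grid that is linear in $|\F\cap G|$, so no convexity argument is needed.

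Second, let $A=\cap P'$ and $B=\cup P'$ with $|B\setminus A|\ge s$. Given $A\in G$, the event $B\in G$ forces $B\setminus A$ to be a union of initial segments of the remaining parts of the chains. This has probability at most $\binom{|B\setminus A|+d-1}{d-1}\frac{|B\setminus A|!\,(n-|B|)!}{(n-|A|)!}=O_{s,d}(n^{-s})$ for middle-layer sets.

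The paper uses Tomon's partition of $2^{[n]}$ into grids of side $\Theta(\sqrt{n/d})$, composed with a uniformly random permutation of $[n]$. Every set then lies in exactly one grid, so only this conditional probability matters. Dividing the $\frac{\varepsilon}{2}\binom{n}{n/2}$ gapped copies found across the grids by it yields $\Omega_{t,d}(\varepsilon n^{s}\binom{n}{n/2})$ copies.
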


Note that if $P$ has a minimum and maximum element, then the number of induced copies of $P$ in the middle $t$ levels of $2^{[n]}$ is $\Theta(n^{ t - 1}\binom{n}{n/2})$. 

Our proof technique actually produces more \emph{structured} copies. We say that a copy of $P$ in a poset $Q$ is $t$-gapped, if $|\cup P -  \cap P| \geq t$.  We will use $\ex_{t}^*(n, d, P)$ to be the largest subset of $[n]^d$  without an induced copy of $P$ which is $t$-gapped with respect to $[n]^d$. It turns out that for a fixed poset $P$ these two extremal numbers, $\ex_{t}^*(n, d, P)$ and $\ex(n, d, P)$,  are not too far from each other, as the next lemma shows. Our technique of the proof of Theorem~\ref{supersaturation2} exploits this fact.

\begin{lemma}\label{gappedturanbound}
Let $P$ be a poset on at least two elements. Then, for all integers $t\geq 1$, 
    $$\ex_t^*(n, d, P) \leq  \ex(n, d, P) + tn^{d - 1}.$$
\end{lemma}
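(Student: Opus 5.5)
Given $\F\subseteq[n]^d$ with no $t$-gapped induced copy of $P$, I will delete at most $tn^{d-1}$ elements of $\F$ so that what is left contains no induced copy of $P$ at all. The remainder then has size at most $\ex(n,d,P)$, which gives the bound. Here the gap of a copy $\varphi(P)$ is measured in the grid, $|\cup\varphi(P)-\cap\varphi(P)| = \sum_{i=1}^d(\max_i-\min_i)$, where $\max_i$ and $\min_i$ are the largest and smallest $i$-th coordinates over the copy.

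The deletion uses the level (rank) function $r(x)=\sum_i x_i$ on $[n]^d$, which takes values in $\{d,\dots,dn\}$. Partition the levels into consecutive blocks of $t$ levels each. Since $P$ has at least two elements, an induced copy of $P$ is either a single chain of length at least two, or it contains two comparable elements. In either case the idea is as follows. Any copy whose gap is less than $t$ spans fewer than $t$ consecutive levels, because the difference in rank between $\cap$ and $\cup$ of the copy equals the gap. Such a copy therefore lies within at most two adjacent blocks.

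Rather than block decompositions, the cleaner route I would try first is a pigeonhole shift. For each residue $j\in\{0,\dots,t-1\}$, let $L_j$ be the set of elements of $\F$ whose rank is $\equiv j \pmod t$. Choose $j$ so that $|L_j\cap\F|$ is minimal. The union of all $L_j$ has size at most $dn^{d-1}\cdot\lceil n/t\rceil\cdot t$-ish, and I need the chosen $L_j$ to have size at most about $n^{d-1}$ per level times the number of levels used. Since each level of $[n]^d$ has at most $n^{d-1}$ elements, this works only when the number of removed levels is bounded. I expect this counting to be the main obstacle, because the naive residue deletion removes about $dn/t$ levels, not $t$.

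So I would instead delete exactly $t$ full levels' worth of structure in a smarter way. For each $x\in\F$, say that $x$ is \emph{bad} if some copy of $P$ in $\F$ with gap less than $t$ uses $x$ as its minimum element $\cap$. Comparable elements in a copy with small gap are close, so I would project along the all-ones direction: each line $\{x+s(1,\dots,1)\}$ meets each level at most once, and there are at most $dn^{d-1}$ such lines. On each line I would remove the first $t$ elements of $\F$ (in increasing order). There are at most roughly $n^{d-1}$ lines, which should give the $tn^{d-1}$ bound, after adjusting constants or using lines in a single coordinate direction: lines parallel to $e_1$ number exactly $n^{d-1}$.

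With coordinate lines parallel to $e_1$, I remove from $\F$ the $t$ smallest elements of $\F$ on each such line, for a total of at most $tn^{d-1}$ deletions. Now let $\varphi(P)$ be any induced copy of $P$ in the remainder, and let $y$ be its minimum element in the first coordinate. I then need a way to turn this into a $t$-gapped copy. The natural move is to replace $y$ by one of the $t$ deleted elements below it on its line, each of which is comparable in the same way, so the gap grows by the shift. Preserving the induced structure under this replacement is the delicate point: moving $y$ down in coordinate $1$ may create new comparabilities. To handle this, I would replace instead an element that is minimal in $\varphi(P)$, choosing it to be the one with the smallest first coordinate among the minimal elements. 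Decreasing its first coordinate keeps all of its existing relations, since it lies below everything it was below. I still need to check that it does not create new relations with incomparable elements. If that fails, I would fall back on taking lines through all $\lceil\cdot\rceil$ and applying the shift to an entire order-ideal, which is where I expect the real work to lie.
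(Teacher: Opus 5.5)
Your final construction is the paper's argument reflected. The paper deletes, on each line parallel to $e_1$, the $t$ \emph{highest} points of $\F$. In a surviving copy it takes $A$ maximal among the elements with the largest first coordinate, and replaces it by a point of $\F$ on its line at least $t$ higher. You delete the $t$ lowest points and push a minimal element down. The level-block, residue and all-ones-line detours can be dropped. (Also, an induced copy of $P$ need not contain two comparable elements when $P$ is an antichain, but your final argument does not use this.)

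As written, though, the proof is not finished. The one step with real content, that moving the element creates no new comparabilities, is left as ``I still need to check,'' with a vague fallback. Relatedly, ``one of the $t$ deleted elements'' must be the lowest of them, so that $\pi_1(B)\le \pi_1(A)-t$.

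The check does go through with your choice, and it is exactly where ``smallest first coordinate'' matters. If $A$ has the smallest first coordinate among the minimal elements of $\varphi(P)$, then it has the smallest first coordinate in the whole copy, since every element lies above some minimal element. Let $B$ be the lowest point of $\F$ on $A$'s $e_1$-line. Then $\pi_1(B)\le\pi_1(A)-t<\min_{C\in\varphi(P)}\pi_1(C)$, and in particular $B\notin\varphi(P)$.

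Now take $C\in\varphi(P)\setminus\{A\}$.
\begin{itemize}
\item If $A\preceq C$, then $B\preceq C$, since $B\preceq A$.
\item Conversely, if $B\preceq C$, then $\pi_i(A)=\pi_i(B)\le\pi_i(C)$ for $i\ge 2$, and $\pi_1(A)\le\pi_1(C)$ by the choice of $A$. So $A\preceq C$.
\item Neither $C\preceq A$ nor $C\preceq B$ can hold, because $B\preceq A$ and $A$ is minimal.
\end{itemize}
Hence $(\varphi(P)\setminus\{A\})\cup\{B\}$ is an induced copy of $P$ in $\F$. Because $|P|\ge 2$, some other element has first coordinate at least $\pi_1(A)$, so the first-coordinate spread, and hence the gap, is at least $t$. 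With this paragraph added, your proof is complete and equivalent to the paper's.
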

\begin{proof}
    Given $T = (x_1, \dots x_d) \in [n]^d$, we let $\pi_i(T) = x_i$. Fix a subset $\F$ of $[n]^d$ of size at least $\ex(n, d, P) + tn^{d - 1}$.
    Let $\F'$ be the set of $(x_1, \dots x_d)$ satisfying that the number of $y \in [n]$ such that $y > x_1$ and $(y, x_2, \dots x_d) \in \F$ is strictly less than $t$. Observe that $|\F'| \leq tn^{d - 1}$. 
    Let $\F'' = \F \setminus \F'$. Since $|\F''| \geq \ex(n, d, P)$, $\F''$ contains a copy $\varphi(P)$ of $P$. We examine the projection of $\varphi(P)$ onto the first side of the grid, and let $x$ be the maximum element among the projections of $\varphi(P)$ onto the first side of the grid. 
    Let $A$ be a maximal element among elements of $\varphi(P)$ whose projection onto the first side of the grid equals $x$.
    
    Since $A \in \F''$, there exists a $B \in \F'$ which agrees on all coordinates of $A$ except the first, where $\pi_1(B) \geq \pi_1(A) + t$. Let $P^* = (\varphi(P) \setminus \{A\}) \cup \{B\}$. We have that $|\cup P^* - \cap P^*| \geq t$. Next, we prove that $P^* \cong P$, which would complete the proof.  
    Consider any $C \in \varphi(P)$. By maximality of $A$, either $C \preceq A$ or $C \not \preceq A$. If $C \preceq A$, then $C \preceq B$. So, we may assume $C \not \preceq A$. By choice of $x$, if $C \not \preceq A$, then there exists a coordinate $i \neq 1$ such that $\pi_i(C) > \pi_i(A)$. So, we have that $\pi_i(C) > \pi_i(B)$, and thus $C \not \preceq B$. This shows that $P^* \cong P$. 
\end{proof}

\begin{lemma}\label{standardaveraging}
    Suppose $G$ is $[n_1] \times [n_2] \times \dots \times [n_d]$, with $n_1 \leq n_2 \leq \dots \leq n_d$. Furthermore, suppose $P$ is a poset such that $\ex(n,  d, P) \leq C_Pn^{d - 1}$. Then, the largest $t$-gapped induced $P$-free subfamily of $G$ is at most $\frac{C_P + t}{n_1}|G|$. 
\end{lemma}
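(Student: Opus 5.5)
We argue by averaging over cubical subgrids of $G$ of side $n_1$, combined with Lemma~\ref{gappedturanbound}. Let $\F \subseteq G$ contain no induced $t$-gapped copy of $P$. For every choice of $d$ sets $I_1 \subseteq [n_1], I_2 \subseteq [n_2], \dots, I_d \subseteq [n_d]$ with $|I_j| = n_1$, the product $I_1 \times \dots \times I_d$ is order-isomorphic to $[n_1]^d$ via the order-preserving bijections $I_j \to [n_1]$.

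The key point is that $t$-gappedness transfers in the right direction. The induced order on the subgrid agrees with the grid order, so an induced copy of $P$ in $\F \cap (I_1 \times \dots \times I_d)$ corresponds to one in $[n_1]^d$, and conversely. If a copy is $t$-gapped with respect to $[n_1]^d$, then the longest chain between its meet and join, measured inside the subgrid, has length at least $t$. Since the subgrid sits inside $G$, that chain is also a chain in $G$, so the copy is $t$-gapped with respect to $G$. Therefore $\F \cap (I_1\times\dots\times I_d)$ contains no induced copy of $P$ that is $t$-gapped with respect to $[n_1]^d$. By Lemma~\ref{gappedturanbound} and the hypothesis $\ex(n_1,d,P)\le C_P n_1^{d-1}$,
\[
|\F \cap (I_1\times\dots\times I_d)| \leq (C_P + t)\, n_1^{d-1}.
\]

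Now average over all choices of $(I_1,\dots,I_d)$, each $I_j$ a uniformly random $n_1$-subset of $[n_j]$, chosen independently. A fixed point $x \in G$ lies in the random subgrid with probability $\prod_j \frac{n_1}{n_j} = \frac{n_1^d}{|G|}$. Hence
\[
|\F|\cdot \frac{n_1^d}{|G|} = \EE\,|\F\cap (I_1\times\dots\times I_d)| \leq (C_P+t)\,n_1^{d-1},
\]
which gives $|\F| \le \frac{C_P+t}{n_1}|G|$.

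The only delicate step is the transfer of $t$-gappedness. It works because the subgrid's chains are chains of $G$; the reverse comparison is not needed. Everything else is routine.
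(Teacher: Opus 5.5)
Your proposal is correct and follows the paper's proof essentially verbatim: independent uniformly random $n_1$-subsets in each coordinate, the inclusion probability $\prod_j n_1/n_j = n_1^d/|G|$, Lemma~\ref{gappedturanbound} applied to the cubical subgrid, and averaging. You also justify that a copy which is $t$-gapped with respect to the subgrid is $t$-gapped with respect to $G$, which the paper leaves implicit; this is valid, though you should say explicitly that it uses the fact that $I_1\times\dots\times I_d$ is closed under coordinatewise $\min$ and $\max$, so meets and joins computed in the subgrid agree with those in $G$.
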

\begin{proof}
Fix a $t$-gapped $P$-free subfamily $\F$ of $G$. 
Pick a subset $S_i$ of size $n_1$ from $[n_i]$ uniformly at random. There are $\binom{n_i}{n_1}$ choices for each $S_i$. For a given set $A \in G$, the probability $A$ is in $S_1 \times S_2 \times \dots \times S_d$ is selected is $\prod_{i = 1}^d \frac{n_1}{n_i}$. 

Thus, in expectation, the family $\F' = \F \cap S_1 \times \dots \times S_d$ has size $|\F| \cdot \prod_{i = 1}^d \frac{n_1}{n_i}$. By assumption and Lemma~\ref{gappedturanbound}, we have that $|\F'| \leq (C_p + t)n^{d - 1}_1$. Combining, it follows that $|\F| \leq \frac{C_p + t}{n_1} |G|$. 
\end{proof}

We will use the following lemma of Tomon~\cite{tomon2019forbidden}. 

\begin{lemma}[Corollary 8 in \cite{tomon2019forbidden}]\label{gridpartition}
    Let $n, d$ be positive integers such that $n \geq d$. Let $m_1, \dots m_d$ be such that $m_1 +  \dots + m_d = n$ and $m_1, \dots, m_d \in \{ \floor{n/d}, \ceil{n / d}\}$. Then $2^{[n]}$ can be partitioned into $d$-dimensional grids $G_1, \dots G_s$ such that each side is a chain of length at least $c \sqrt{n / d})$ and at most $2c \sqrt{n / d}$.
\end{lemma}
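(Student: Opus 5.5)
The plan is to reduce to the one-dimensional case via a product decomposition. Split $[n]$ into disjoint blocks $X_1,\dots,X_d$ with $|X_i|=m_i$. The map $A\mapsto (A\cap X_1,\dots,A\cap X_d)$ is a poset isomorphism $2^{[n]}\cong 2^{X_1}\times\dots\times 2^{X_d}$ with the componentwise order, because $A\subseteq B$ holds exactly when $A\cap X_i\subseteq B\cap X_i$ for every $i$.

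Suppose each $2^{X_i}$ is partitioned into chains $\{C^{(i)}_j\}_j$ whose sizes lie between $c\sqrt{m_i}$ and $2c\sqrt{m_i}$. Then the sets $C^{(1)}_{j_1}\times\dots\times C^{(d)}_{j_d}$, ranging over all choices of indices, partition $2^{[n]}$. Each of them, with the order induced from $2^{[n]}$, is exactly a $d$-dimensional grid whose sides are the chains $C^{(i)}_{j_i}$. Since $m_i\in\{\floor{n/d},\ceil{n/d}\}$ and $n\ge d$, we have $\sqrt{m_i}=\Theta(\sqrt{n/d})$. After adjusting $c$ by an absolute factor (and applying the one-dimensional statement with a slightly tighter ratio window if needed), this gives the claimed side lengths.

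It therefore suffices to prove the one-dimensional statement: for every $m\ge 1$, $2^{[m]}$ can be partitioned into chains each of size between $L$ and $2L$, where $L=\ceil{c\sqrt m}$. I would start from a symmetric chain decomposition of $2^{[m]}$ (de Bruijn--Tengbergen--Kruyswijk). Any chain of size at least $L$ can be cut into consecutive segments whose sizes all lie in $[L,2L]$, and each segment is again a chain. So the only issue is the short symmetric chains, namely those running from level $k$ to level $m-k$ with $m-2k+1<L$, which live near the middle layers.

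The main obstacle is absorbing these short chains, since chains cannot simply be concatenated. My first attempt would use the product structure once more. Write $[m]=Y\cup Z$ with $|Z|=z\approx 4L$. Take a symmetric chain decomposition $\{C_j\}$ of $2^Y$ and a symmetric chain decomposition $\{D_i\}$ of $2^Z$, and split $2^{[m]}$ into the two-dimensional grids $C_j\times D_i$.

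An $a\times b$ grid (sizes $a\le b$) partitions into $a$ nested hook-shaped chains of sizes $b+a-1,\,b+a-3,\,\dots,\,b-a+1$. Whenever $b-a+1\ge L$, every hook has size at least $L$ and can then be cut into pieces of size in $[L,2L]$. The bad grids are those in which both $C_j$ and $D_i$ are short or have nearly equal sizes. For these I would rechoose the chain partition of the grid, for example by combining hooks of $C_j\times D_i$ with those of a neighbouring grid $C_j\times D_{i'}$ through the symmetric chain structure of $2^Z$, or by iterating the splitting with a different $Z$. I would bound the number of elements lying in bad grids to confirm that only a lower-order correction is needed.

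If this hook-merging scheme becomes too fiddly, the fallback is to invoke directly the one-dimensional lemma of Tomon~\cite{tomon2019forbidden}, which asserts exactly such a chain partition of $2^{[m]}$ into chains of sizes between $c\sqrt m$ and $2c\sqrt m$. Lemma~\ref{gridpartition} is stated there as a corollary of that lemma, and the product reduction above then finishes the proof.
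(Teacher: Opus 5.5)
The paper gives no proof of this lemma; it quotes Corollary~8 of Tomon. Your product reduction is exactly how that corollary follows from Tomon's one-dimensional chain-partition lemma: the $m_i$ in the statement are your block sizes $|X_i|$, and one uses $2^{[n]}\cong 2^{X_1}\times\dots\times 2^{X_d}$ and takes products of chains. So the reduction plus your fallback citation of the one-dimensional theorem is a correct proof along the same route.

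On the side lengths, do not try to tighten ratio windows. Cutting cannot do that, since a chain of length $2L-1$ cannot be cut into pieces of length in $[L,L']$ when $L'<2L-1$. Instead, use only the lower bound $\Omega(\sqrt{m_i})$ in each block. Then cut every chain, in every block, into consecutive segments of length in $[L,2L-1]$ for one common $L=\Theta(\sqrt{n/d})$; this handles $\floor{n/d}$ versus $\ceil{n/d}$ uniformly. Only the lower bound on the sides is used later in the paper anyway.

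The fallback is not optional, though: the hook-merging sketch does not prove the one-dimensional statement, and its guiding idea is wrong for three reasons.
\begin{itemize}
\item Short chains are not lower order. In a symmetric chain decomposition of $2^{[m]}$, roughly a $1-e^{-c^2/2}$ fraction of the $\binom{m}{\floor{m/2}}$ chains have length below $c\sqrt m$, and together they hold a positive constant fraction of the elements.
\item Counting cannot substitute for a construction. Bounding the number of elements in bad grids is useless for an exact partition: every such element must still be placed in a chain of admissible length, and chains cannot be concatenated.
\item Splitting off $Z$ only relocates the problem. A typical chain $D_i$ of $2^Z$ has length $\Theta(\sqrt z)=\Theta(m^{1/4})\ll L$. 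So for a short $C_j$ (say $|C_j|\le L/2$), all but an $o(1)$ fraction of $C_j\times 2^Z$ lies in grids $C_j\times D_i$ whose longest hook, of length $|C_j|+|D_i|-1$, is already shorter than $L$.
\end{itemize}
Repartitioning those grids, by merging hooks across different $D_i$ in a way you leave unspecified, is precisely the nontrivial content of the results of Hsu, Logan, Shahriari and Towse and of Tomon on partitioning $2^{[m]}$ into chains of length $\Omega(\sqrt m)$. You should simply cite that theorem rather than present the hook scheme as a proof.
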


\begin{lemma}\label{chaincontainedingrid}
   Let $G_1, \dots G_s$ be any partition of $2^{[n]}$ into $d$-dimensional grids.
    Let $\pi$ be a random permutation of $[n]$ and for all $i$ let $G_i' = \pi(G_i)$. Suppose $A \subseteq B$. Then $\PP(B \in G_i' | A \in G_i') \leq  \binom{|B - A| + d - 1}{d - 1} \frac{|B - A|!(n - |B|)!}{(n - |A|)!}$.
\end{lemma}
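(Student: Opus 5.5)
The plan is to condition on the preimage of $A$ under $\pi$ and turn the event into a counting problem inside a single grid. Write $a = |A|$, $b = |B|$ and $r = |B - A| = b - a$.

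\textbf{Step 1 (reduce to a uniform superset).} Since $G_i' = \pi(G_i)$, the event $A \in G_i'$ means $A' := \pi^{-1}(A) \in G_i$. In the same way, $B \in G_i'$ means $B' := \pi^{-1}(B) \in G_i$. Condition on $\pi^{-1}(A) = A'$ for a fixed $a$-set $A'$. The permutation $\pi^{-1}$ restricted to $[n]\setminus A$ is then a uniformly random bijection onto $[n]\setminus A'$. Hence $\pi^{-1}(B\setminus A)$ is a uniformly random $r$-subset of $[n]\setminus A'$. In other words, $B'$ is a uniformly random superset of $A'$ of size $b$, and there are $\binom{n-a}{r}$ of these, all equally likely. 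So it suffices to prove the following for every grid $G$ in the partition and every $A' \in G$:
\[
\#\{Y \in G : Y \supseteq A',\ |Y| = |A'| + r\} \leq \binom{r+d-1}{d-1}.
\]
Averaging over $A'$ then gives the lemma, because
\[
\binom{r+d-1}{d-1}\Big/\binom{n-a}{r} = \binom{r+d-1}{d-1}\frac{r!\,(n-b)!}{(n-a)!}.
\]

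\textbf{Step 2 (counting inside one grid).} Here I use the structure of the grids, as in Tomon's construction behind Lemma~\ref{gridpartition}. The ground set of $G$ splits into disjoint parts $X_1,\dots,X_d$, plus possibly a fixed set $Z$ common to all elements of $G$. Each side is a chain $\mathcal{C}_j$ of subsets of $X_j$, and the elements of $G$ are exactly the sets $Z \cup C_1 \cup \dots \cup C_d$ with $C_j \in \mathcal{C}_j$. Since the $X_j$ are disjoint, both containment and size behave coordinatewise. Write $A' = Z\cup C_1\cup\dots\cup C_d$ and $Y = Z\cup D_1\cup\dots\cup D_d$. Then $Y\supseteq A'$ holds iff $D_j \supseteq C_j$ for every $j$. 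In that case $|Y| - |A'| = \sum_j r_j$ with $r_j = |D_j| - |C_j| \geq 0$.

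A chain contains at most one set of each size. So once the vector $(r_1,\dots,r_d)$ is fixed, each $D_j$ is determined, and hence so is $Y$. The number of nonnegative integer vectors with $\sum_j r_j = r$ is $\binom{r+d-1}{d-1}$, which gives the claimed count. This step does not need the chains to be saturated.

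\textbf{Main obstacle.} The delicate point is Step 2. The statement says ``any partition into $d$-dimensional grids,'' but the counting requires containment and cardinality in $G$ to decompose additively across coordinates. This holds for grids built as products of chains on disjoint ground sets, and those are the grids supplied by Lemma~\ref{gridpartition}. I would state explicitly that ``grid'' is meant in this sense. For an abstract isomorphic copy of $[k]^d$ sitting inside $2^{[n]}$, two things could go wrong. Boolean containment might not match the grid order. Also, the set sizes need not add across coordinates, which would break the bound $\binom{r+d-1}{d-1}$. Step 1 is routine symmetry.
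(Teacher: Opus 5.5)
Your proof is correct and is essentially the paper's argument. Once $A$'s position in a product-of-chains grid is fixed, $B$ can land in the same grid only via one of the $\binom{r+d-1}{d-1}$ compositions of $r=|B-A|$ into $d$ nonnegative parts. Dividing by the $\binom{n-|A|}{r}$ equally likely choices of $\pi^{-1}(B\setminus A)$ is the same computation as the paper's $\binom{r+d-1}{d-1}\,r!\,(n-|B|)!$ favourable permutations out of $(n-|A|)!$.

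Your ``at most one set of each size per chain'' formulation is slightly cleaner than the paper's ``initial segment'' wording, which tacitly assumes saturated chains starting at $\emptyset$. Your caveat that abstract grids would break the bound is overcautious: for any induced copy of a grid, the supersets of $A'$ in $G$ of size $|A'|+r$ form an antichain among grid points $q\succeq p$ with $\|q-p\|_1\le r$, where $p$ is the point of $A'$. Raising one fixed coordinate of each $q-p$ until its $\ell_1$-norm is $r$ is injective on antichains, so the bound $\binom{r+d-1}{d-1}$ still holds.
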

\begin{proof}
Suppose $A \in G_i' = S_1 \times S_2 \times \dots \times S_d$ with each $S_i$ a chain. For $B \in G_i'$, we must have that for all $i$, $B \cap S_i$ is an initial segment of $S_i$ and $B = \bigcup_{i = 1}^d B \cap S_i$. Since we know $A \in G_i'$, we have that $A \cap S_i$ is an initial segment of $S_i$ and $A = \bigcup_{i = 1}^d A \cap S_i$. So, we have that $(B \setminus A) \cap (S_i \setminus A)$ must be an initial segment of $S_i \setminus A$ and $B \setminus A = \bigcup_{i = 1}^d ( B \setminus A) \cap (S_i \setminus A)$. There are at most $\binom{|B \setminus A| + d - 1}{ d- 1} |B \setminus A|!$ ways of distributing the elements of $B \setminus A$ satisfying these two conditions. Any permutation of the remaining $n - |B|$ will still leaves us with $B \in G_i'$. Dividing by the total $(n - |A|)!$ permutations gives the desired bound. 
\end{proof}

\begin{proof}[Proof of Theorem~\ref{supersaturation2}]
    Let $P$ be some poset and let $d$ be a positive integer such that $\ex(n, d, P) \leq C_Pn^{d - 1}$ for some constant $C_P$. Let $K = \frac{\sqrt{\pi}}{c}$, with $c$ given by Lemma~\ref{gridpartition}.  Fix an 
     $\F \subseteq 2^{[n]}$, such that $|\F| \geq (t + K \sqrt{d} C_p + \varepsilon) \binom{n }{n/2}$. 
     Assuming that $n$ is sufficiently large, by a standard reduction, there exists $\F'\subseteq \F$
 such that $|\F'|\geq (t+K \sqrt{d} C_p+\frac{\varepsilon}{2})\binom{n}{n/2}$  and for all $A \in \F'$, $||A| - n/2| \leq 2\sqrt{n \log(n)}$.

Take the partition $G_1, \dots G_s$ of $2^{[n]}$ given by Lemma~\ref{gridpartition}. 
Let $\pi$ be random permutation of $[n]$. For each $i\in [s]$, let $G'_i=\pi(G_i)$.
For ease, let $t' = \floor{\frac{t}{K \sqrt{d}}}$. Note that by assumption and Lemma~\ref{standardaveraging}, the largest $t'$-gapped $P$-free subfamily of $G_i'$ is no more than
$\frac{C_p + t'}{c \sqrt{n / d}}|G_i'|$. By iteratively finding a $t'$-gapped induced copy of $P$ and then removing an edge from it,
one can see the number of $t'$-gapped induced copies of $P$ in $\F' \cap G_i'$ is at least $|\F' \cap G_i'| - \frac{C_p + t'}{c \sqrt{n / d}}|G_i'|$. 

    Therefore the number $\lambda$ of $t'$-gapped induced copies of $P$ in $\bigcup_{i=1}^s G'_i$ is at least
    \[\sum_{i = 1}^s (|\F \cap G'_i| - \frac{C_p + t'}{c \sqrt{n / d}}|G_i'| )
        \geq  |\F'| - \frac{C_p + t'}{c\sqrt{n/ d}}2^n
        \geq |\F'| - (\frac{\sqrt{\pi d}}{c}C_p + t)\binom{n}{n/2},\]
    where we used that for $n$ sufficiently large $\binom{n}{n/2} \geq \frac{1}{\sqrt{\pi n}}2^n$, as $\binom{n}{n/2} = (1 + o(1))\sqrt{\frac{2}{\pi n}}2^n$
    and $t'\leq \frac{t}{K\sqrt{d}}=\frac{c}{\sqrt{\pi d}}t$. Since $|\F'| \geq (t + \frac{\sqrt{\pi d}}{c}C_p + \frac{\ve}{2})\binom{n}{n/2}$, this guarantees us at least $\frac{\ve}{2} \binom{n}{n/2}$ many $t$-gapped copies of $P$. Observe that if $\F' \cap G_i'$ contains a $t'$-gapped copy $P'$ of $P$, then since $G'_i$ is a grid  it must contain both $A:=\cap P'$ and $B:=\cup P'$.  

    By Lemma~\ref{chaincontainedingrid}, the probability $G_i'$ contains $B$ conditioned on $G_i'$ containing $A$ is no more than $\binom{|B| - |A| + d - 1}{d - 1} \frac{(|B| - |A|)!(n - |B|)!}{(n - |A|)!}$. Since all the $G_i'$'s partition $2^{[n]}$, there must be some $G_i'$ which contains $A$. Thus, the probability $P'$ is contained in some $G_i'$ is no more than $\binom{|B| - |A| + d - 1}{d - 1} \frac{(|B| - |A|)!(n - |B|)!}{(n - |A|)!}$. 

    This is monotonically decreasing in $|B| - |A|$ for every fixed $|A|$, since $||B| - |A|| \leq 4 \sqrt{n \log(n)}$. Since $| B \setminus A| \geq t'$ and $\frac{n}{4}\leq |A'|,|B'| \leq \frac{3}{4} n$, we have that this is at most $\binom{t '+ d - 1}{d - 1} t! 8^{t'} n^{-t'}$. 
    Thus, if $\Pc$ denotes the family of induced $t'$-gapped copies in $\F'$, then 
    \[|\Pc|\binom{t '+ d - 1}{d - 1} t! 8^{t'} n^{-t'} \geq \lambda \geq  \frac{\ve}{2} \binom{n}{n/2}. \]
     Thus, $|\Pc|\geq  \frac{\ve }{\binom{t' + d - 1}{d - 1} t'! 8^{t' + 1}}n^{t'} \binom{n}{ n/2}$, as desired.

    \end{proof}

\bibliographystyle{abbrv}

\bibliography{refs.bib}

\end{document}